\newcommand \on{\overline{\nabla}}
\newcommand \G{\Gamma}
\newcommand \la{\lambda}
\newcommand \br{\mathbb{R}}
\newcommand \id{\operatorname{id}}
\newcommand \<{\langle}
\renewcommand \>{\rangle}
\newcommand \ip{\< \cdot, \cdot \>}
\newcommand \mU{\mathcal{U}}
\newcommand \Tr{\operatorname{Tr}}
\newcommand \tM{\overline{M}}
\newcommand \tR{\overline{R}}
\theoremstyle{plain}
\newtheorem{theorem}{Theorem}
\newtheorem*{theorem*}{Theorem}
\newtheorem*{corollary*}{Corollary}
\newtheorem*{conj*}{Conjecture}
\newtheorem{lemma}{Lemma}
\newtheorem*{lemma*}{Lemma}
\newtheorem*{prop*}{Proposition}
\theoremstyle{definition}
\newtheorem*{definition*}{Definition}
\theoremstyle{remark}
\newtheorem{example}{Example}
\begin{document}

\title{Weakly Einstein hypersurfaces in space forms}

\author{Jihun Kim}
\address{Department of Mathematics, Sungkyunkwan University, Suwon 16419, Korea, and\newline
\indent Department of Mathematical and Physical Sciences, La Trobe University, Melbourne, Victoria, 3086, Australia}
\email{jihunkim@skku.edu, jihun.kim@latrobe.edu.au}

\author{Yuri Nikolayevsky}
\address{Department of Mathematical and Physical Sciences, La Trobe University, Melbourne, Victoria, 3086, Australia}
\email{y.nikolayevsky@latrobe.edu.au}

\author{JeongHyeong Park}
\address{Department of Mathematics, Sungkyunkwan University, Suwon 16419, Korea}
\email{parkj@skku.edu}

\thanks {The first and third named authors were supported by the National Research Foundation of Korea(NRF) grant funded by the Korea government(MSIT) (RS-2024-00334956).
\\
\indent The first named author is thankful to the Department of Mathematical and Physical Sciences at La Trobe University for hospitality.
\\
\indent The second named author was partially supported by ARC Discovery grant DP210100951.
}
\subjclass[2020]{Primary 53C25, 53B25} 
\keywords{weakly Einstein manifold, submanifold in a space form}


\dedicatory{Dedicated to the memory of Professor Old\v{r}ich Kowalski}

\begin{abstract}
A Riemannian manifold $(M,g)$ is called \emph{weakly Einstein} if the tensor $R_{iabc}R_{j}^{~~abc}$ is a scalar multiple of the metric tensor $g_{ij}$. We give a complete classification of weakly Einstein hypersurfaces in the spaces of nonzero constant curvature (the classification in a Euclidean space has been previously known). The main result states that such a hypersurface can only be the product of two spaces of constant curvature or a rotation hypersurface.
\end{abstract}

\maketitle

\section{Introduction}
\label{s:intro}

Einstein hypersurfaces in spaces of constant curvature are well known: by \cite[Theorem~7.1]{Fia}, any such (connected) hypersurface is either totally umbilical (in particular, totally geodesic), or is of conullity $1$, or is an open domain of the product of two spheres of the same Ricci curvature in the sphere. In this paper, we study hypersurfaces in the spaces of constant curvature which satisfy a similar, but different intrinsic condition, as in the following definition.

\begin{definition*}
    An $n$-dimensional Riemannian manifold $(M,g)$ is called \emph{weakly Einstein} if
    \begin{equation*}
        \check{R}_{ij}:=R_{iabc}R_{j}^{~~abc}=\frac{\|R\|^2}{n}g_{ij}.
    \end{equation*}
\end{definition*}

The definition of weakly Einstein manifolds was coined by Euh, Park, and Sekigawa, who provided several applications, in their study of a curvature identity that holds on any 4-dimensional Riemannian manifold \cite{EPS1, EPS2, EPS3}. Clearly, any $2$-dimensional manifold is weakly Einstein (so throughout the paper we always assume that $n \ge 3$). A $3$-dimensional weakly Einstein manifold is either of constant curvature, or has its Ricci operator of rank one~\cite[Lemma~5]{GHMV}. In dimension $4$, any Einstein manifold is weakly Einstein by \cite{EPS1} (which was the motivation for the definition). Similarly to Einstein manifolds, weakly Einstein \emph{compact} manifolds admit a variational definition: they are critical points of the integral of the squared norm of the curvature tensor on the space of metrics of fixed volume with parallel Ricci tensor. However, the reader should not be misled by the terminology here. In general, an Einstein Riemannian manifold does not have to be weakly Einstein, with the simplest example being the product of two non-isometric round spheres of the same Ricci curvature (although in a class of manifolds possessing a specific structure, these two conditions may be equivalent \cite[Corollary 2.3.1]{KPS}). Moreover, there is no Schur-type result for weakly Einstein manifolds (the proportionality function $\|R\|^2$ in the definition above does not have to be constant). However, if a manifold is weakly Einstein \emph{and} Einstein (or more generally, if its Ricci tensor is Codazzi), then $\|R\|^2$ must be constant when $n \ne 4$ \cite[Lemma~3.3]{BV}.

There is a number of results on weakly Einstein manifolds known in the literature. Arias-Marco and Kowalski classified weakly Einstein $4$-dimensional homogeneous Riemannian manifolds~\cite{AMK}. Several results on weakly Einstein manifolds are given in \cite[\S 6.55-6.63]{Be}. Conformally flat weakly Einstein manifolds are classified in \cite[Theorem~2]{GHMV}. Any irreducible symmetric space is weakly Einstein (as the isotropy representation is irreducible); a reducible symmetric space is weakly Einstein if the factors have the same proportionality constant $\|R\|^2/n$.  Note that some authors include the requirement of not being Einstein in the definition of weakly Einstein manifold. We do not do that and will always say explicitly when the non-Einstein condition is being imposed.

Not too many results on weakly Einstein manifolds are known in the context of submanifold geometry. By~\cite[Theorem~12]{GHMV}, a non-Einstein, weakly Einstein hypersurface $M^n$ in a Euclidean space $\br^{n+1}, \; n \ge 2$, is locally isometric to a warped product of an interval of the real line and a space of constant curvature, with a very special warping function. Extrinsically, such a hypersurface has two principal curvatures $\la \ne 0$ and $-\la$, with multiplicities $1$ and $n-1$, respectively, and is a \emph{rotation} hypersurface (see~\cite[Theorem~4.2]{dCD}). Moreover, from~\cite[Proposition~3.2]{dCD} one can deduce that the hypersurface $M^n$ is homothetic to a part of a \emph{generalised catenoid}: relative to Cartesian coordinates $x_1, \dots, x_{n+1}$ in $\br^{n+1}$, it is the image of the catenary $x_{n+1} = \cosh x_1$ under the action of the subgroup $\mathrm{SO}(n) \subset \mathrm{SO}(n+1)$ keeping the $x_1$-axis pointwise fixed (earlier, this fact appeared in a conference presentation by Mari\~{n}o-Villar). In~\cite{WZ1, WZ2}, the authors classified weakly Einstein hypersurfaces with constant $\|R\|^2$ in $\mathbb{C}P^2$ and $\mathbb{C}H^2$.

As we mentioned earlier, by \cite[Theorem~7.1]{Fia}, an Einstein hypersurface of dimension $n \ge 4$ of a space of constant curvature must either be totally umbilical, or have zero extrinsic curvature, or be a product of two spheres of the same Ricci curvature in the sphere (see Example~\ref{ex:prod}). In the first two cases (and also in the cases when $n \le 3$), the resulting hypersurface is of constant curvature, and hence is weakly Einstein. In the third case, the hypersurface is Einstein only when it is the product of two congruent spheres.

To state the classification, we introduce two classes of weakly Einstein hypersurfaces (both these classes have already been studied in the literature, see~\cite{MV, GHMV}).

\begin{example} \label{ex:prod}
  \emph{Products}. It is easy to see from the definition that the Riemannian product $M^n=M^{n_1}_{1}(\kappa_1)\times M^{n_2}_{2}(\kappa_2)$ of two spaces of constant curvature is weakly Einstein if and only if $\kappa_1^2(n_1-1)=\kappa_2^2(n_2-1)$ (and is Einstein if and only if $\kappa_1 (n_1-1)=\kappa_2 (n_2-1)$). This gives the following construction. Let $n_2 > n_1 \ge 2$, and let $\br^{n_1+1}$ be equipped with a Euclidean inner product $g_1$, and $\br^{n_2+1}$ be equipped with either Euclidean or Minkowski inner product $g_2$. For $i=1,2$, let $M^{n_i}(\kappa_i) \subset \br^{n_i+1}$ be a connected component of the subset $g_i(x,x) = \kappa_i^{-1}, \; x \in \br^{n_i+1}$, with $\kappa_1^2(n_1-1) = \kappa_2^2(n_2-1)$. Then the Cartesian product $M^{n_1}(\kappa_1) \times M^{n_2}(\kappa_2) \subset \br^{n_1+n_2+2}$ is a weakly Einstein, non-Einstein hypersurface in the space of constant curvature $M^{n_1+n_2+1}(\kappa) \subset \br^{n_1+n_2+2}$, where $\kappa^{-1}= \kappa_1^{-1} + \kappa_2^{-1}$.
\end{example}

\begin{example} \label{ex:rotation}
  \emph{Rotation hypersurfaces}. We follow the construction in~\cite[Section~2]{dCD}. Let $\br^3$ be a linear space with the Cartesian coordinates $x_1, x_2$ and $x_3$ equipped with either Euclidean or Minkowski inner product $g_1$, and let $\br^{n-1}$ be equipped with a Euclidean inner product $g_2$. Let $g=g_1+g_2$ be the inner product on $\br^{n+2} = \br^3 \oplus \br^{n-1}$. Define the surface $N^2(\kappa) \subset (\br^3, g_1)$ of constant Gauss curvature $\kappa$, where $\kappa > 0$ (respectively, $\kappa < 0$) if $g_1$ is Euclidean (respectively, Minkowskian), to be a connected component of the set $g_1(x,x) = \kappa^{-1}$. Let $G$ be the identity component of the group of isometries of $(\br^{n+2},g)$ which keeps the points of the $(x_2,x_3)$-plane pointwise fixed. Then the orbit $\tM^{n+1}(\kappa) \subset \br^{n+2}$ of $N^2(\kappa)$ under the action of $G$ is a hypersurface of constant curvature. If $\gamma \subset N^2(\kappa)$ is a curve, the orbit $\mathrm{R}_\gamma \subset \tM^{n+1}(\kappa)$ under the action of $G$ is called the \emph{rotation hypersurface with profile $\gamma$}.

  Let $\gamma(s)=(x_1(s),x_2(s),x_3(s))$ be a curve lying on the surface $N^2(\kappa) \subset \br^3$, where $s$ is an arc length parameter, so that $g_1(\gamma(s),\gamma(s)) = \kappa^{-1}$ and $g_1(\dot{\gamma}(s),\dot{\gamma}(s)) = 1$. We consider the following four choices for the curve $\gamma$ (depending on one real parameter) and the inner product $g_1$ (where $x=(x_1,x_2,x_3)$).
  \begin{enumerate}[label={\rm{(\roman*)}}, ref=\roman*]
    \item \label{it:rotss}
    Spherical case. $g_1(x,x) = x_1^2 +x_2^2+x_3^2$ and $x_1^2 = s^2 + \kappa^{-1} \eta^2$, where $0 < \eta <1$ and $s \in (-\sqrt{\kappa^{-1}\eta(1-\eta)},\sqrt{\kappa^{-1}\eta(1-\eta)})$.

    \item \label{it:rothh}
    Hyperbolic-hyperbolic case. $g_1(x,x) = -x_1^2 +x_2^2+x_3^2$ and $x_1^2 = -s^2 - \kappa^{-1} \eta^2$, where $\eta > 1$ and $s \in (-\sqrt{-\kappa^{-1}\eta(\eta-1)},\sqrt{-\kappa^{-1}\eta(\eta-1)})$.

    \item \label{it:roths}
    Hyperbolic-spherical case. $g_1(x,x) = x_1^2 +x_2^2-x_3^2$ and $x_1^2 = s^2 + b$, where either $b > \kappa^{-1}$ and then $s \in (-\infty, +\infty)$ or $b= \kappa^{-1} \eta^2, \; \eta > 0$ and $s \in (\sqrt{-\kappa^{-1}\eta(1+\eta)},+\infty)$.

    \item \label{it:rothp}
    Hyperbolic-parabolic case. $g_1(x,x) = 2x_1x_3 +x_2^2$ and $x_1^2 = as$, where $a > 0$ and $s \in (\frac12\sqrt{-\kappa^{-1}},+\infty)$.
  \end{enumerate}
  The equations of the curve $\gamma$ can be presented explicitly in terms of elliptic integrals. 

  In all four cases (\ref{it:rotss} -- \ref{it:rothp}), the rotation hypersurface $\mathrm{R}_\gamma \subset \tM^{n+1}(\kappa)$ is weakly Einstein and not Einstein. This can be easily seen from~\cite[Proposition~3.2]{dCD} and~\eqref{eq:cRaa}.
\end{example}

Note that the hypersurfaces in Examples~\ref{ex:prod} and~\ref{ex:rotation} have exactly two principal curvatures. 

We prove the following.

\begin{theorem} \label{th:main}
  Let $M^n \subset \tM^{n+1}(\kappa), \; n \ge 3$, be a connected, weakly Einstein, non-Einstein smooth hypersurface in the space of constant curvature $\kappa \ne 0$. Then one of the following holds.
  \begin{enumerate}[label={\rm{(\Alph*)}},ref=\Alph*]
    \item\label{it:thprod}
    The hypersurface $M^n \subset \tM^{n+1}(\kappa)$ is the product hypersurface as given in Example~\ref{ex:prod} \emph{(}note that we must have $n \ge 5$\emph{)}.

    \item\label{it:throt}
    The hypersurface $M^n \subset \tM^{n+1}(\kappa)$ is the rotation hypersurface as given in Example~\ref{ex:rotation}.
  \end{enumerate}
\end{theorem}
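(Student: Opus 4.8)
The strategy is to work at a point $p \in M^n$ and analyze the shape operator $A$ using the Gauss equation, which relates the intrinsic curvature $R$ to $A$ and $\kappa$. Writing $A$ in an orthonormal eigenbasis with principal curvatures $\la_1, \dots, \la_n$, the Gauss equation gives $R_{ijkl} = \kappa(g_{ik}g_{jl} - g_{il}g_{jk}) + \la_i\la_j(\delta_{ik}\delta_{jl} - \delta_{il}\delta_{jk})$ in this basis, so both $\Ric$ and $\check R$ become explicit polynomials in the $\la_i$ and $\kappa$. First I would substitute this into the weakly Einstein condition $\check R_{ij} = \frac{\|R\|^2}{n} g_{ij}$. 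Since $\check R$ must be diagonal in the principal basis (it is a multiple of $g$), one first extracts the off-diagonal vanishing automatically, and the diagonal entries $\check R_{ii}$ give, for each $i$, an equation of the form $f(\la_i) = \mathrm{const}$ where $f(t)$ is a fixed polynomial of degree at most $3$ (roughly $f(t) = (n-2)t^3 + \text{(lower order with coefficients built from } \kappa, \Tr A, \Tr A^2)$). Hence all principal curvatures are roots of one cubic, so $A$ has at most three distinct eigenvalues.

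The next step is to rule out the genuinely three-eigenvalue case and pin down the two-eigenvalue case. With three distinct roots $\la, \mu, \nu$ of the cubic $f(t) - c = 0$, Vieta's formulas tie $\la + \mu + \nu$, $\la\mu + \mu\nu + \nu\la$, $\la\mu\nu$ to the coefficients, which themselves involve $\Tr A = m_1\la + m_2\mu + m_3\nu$ and $\Tr A^2 = m_1\la^2 + \cdots$ where $m_i$ are the multiplicities; combined with the Einstein-obstruction (we are told $M$ is \emph{not} Einstein, so the Ricci tensor $\Ric_{ii} = (n-1)\kappa + \la_i(\Tr A - \la_i)$ is not constant in $i$), I would derive a contradiction by a finite case check on multiplicities, or show it forces $\kappa = 0$. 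For two distinct principal curvatures $\la$ (multiplicity $k$) and $\mu$ (multiplicity $n-k$), the system reduces to a couple of algebraic relations between $\la, \mu, \kappa, k$; the non-Einstein condition excludes $\la(\Tr A - \la) = \mu(\Tr A - \mu)$, and I expect the surviving solutions to split precisely into (a) $\la\mu = -\kappa$ with both multiplicities $\ge 2$ — which via the Gauss equation makes the distributions integrable with totally geodesic, constant-curvature leaves, yielding the product of Example~\ref{ex:prod} after a de Rham / Moore-type argument — and (b) one multiplicity equal to $1$, which is the rotation-hypersurface branch.

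To finish branch (b), one has a hypersurface with principal curvatures $\la$ (simple) and $\mu$ (multiplicity $n-1$) subject to a fixed algebraic relation $F(\la,\mu,\kappa)=0$ coming from weak-Einstein. Here I would invoke the structure theory of such hypersurfaces: the $\mu$-eigendistribution is integrable (its leaves are umbilical in $M$ and, by Gauss, of constant curvature), and the Codazzi equation forces the integral curves of the $\la$-direction to be geodesics and controls the variation of $\la, \mu$ along them — this is exactly the setup of de Carmo--Dajczer rotation hypersurfaces in \cite{dCD}, so one matches the ODE for $\mu$ along the profile curve with their Proposition~3.2 and reads off that $\mathrm{R}_\gamma$ is one of the four types in Example~\ref{ex:rotation} (the four types corresponding to the causal character of the relevant plane / the sign data in $\tM^{n+1}(\kappa)$). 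The main obstacle, I expect, is the three-distinct-eigenvalue analysis and the careful bookkeeping in branch (a) to go from the pointwise algebraic data to a global product decomposition: one must show the relevant distributions are smooth and integrable everywhere (the eigenvalue multiplicities could a priori jump on a closed set), then run a splitting theorem; handling the possible Minkowski ambient factor (so that one sphere may be replaced by a hyperbolic space) requires care with signs but no new idea. Establishing that $\|R\|^2$ need not be assumed constant — it is not, in general, for weakly Einstein manifolds — means I cannot shortcut via \cite[Lemma~3.3]{BV}, so the cubic-root argument must be run with the proportionality function left variable, which is the reason the coefficients of $f$ carry $\Tr A$ and $\Tr A^2$ rather than constants.
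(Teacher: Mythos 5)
There is a genuine gap, and it starts at the very first step: the diagonal weakly Einstein equations are \emph{quartic}, not cubic, in the principal curvatures. From \eqref{eq:cRii} one gets $\sum_{j\ne i}(\kappa+\la_i\la_j)^2=(n-1)\kappa^2+2\kappa\la_i(\Tr S-\la_i)+\la_i^2(\Tr S^2-\la_i^2)$, which is exactly the degree-$4$ equation \eqref{eq:la4} of Lemma~\ref{l:four}; hence the correct conclusion is that there are at most \emph{four} distinct principal curvatures, not three. Your proposal never confronts the four-curvature case at all, and it underestimates the three-curvature case: the weakly Einstein conditions \eqref{eq:cRaa} with $q=3$ (or $q=4$) amount to only two (or three) polynomial relations among the distinct curvatures, so they admit pointwise solutions for every choice of multiplicities, and no amount of Vieta-plus-multiplicity bookkeeping or use of the non-Einstein hypothesis gives a pointwise contradiction. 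Ruling out $q=3,4$ is precisely the hard core of the problem: one must bring in the Codazzi and Gauss equations to show that the algebraic relations propagate into differential constraints forcing the curvatures to be constant, then contradict constancy using the classification of isoparametric hypersurfaces with three (Cartan) and four (M\"unzner) principal curvatures, plus Tsukada's classification of curvature homogeneous hypersurfaces and, for $n=q=3$, the rank-one-Ricci characterization together with the Bryant--Florit--Ziller classification; the elimination arguments are long and partly computer aided. Your plan of ``a finite case check on multiplicities, or show it forces $\kappa=0$'' would simply fail here.

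The two-curvature branch of your outline is essentially right in spirit (both multiplicities $\ge 2$ forces isoparametric and hence the product of Example~\ref{ex:prod}; one multiplicity $1$ gives the relation $\mu_2^2+\mu_1\mu_2+2\kappa=0$ and the do Carmo--Dajczer rotation hypersurfaces of Example~\ref{ex:rotation}), though note that $\la\mu=-\kappa$ is a consequence of the isoparametric classification rather than of the weakly Einstein algebra itself. A second, smaller gap is the local-to-global step you only flag as an ``obstacle'': since the number of principal curvatures and the multiplicities need only be locally constant on open dense sets, one must show that $M^n$ cannot be glued from different local models across umbilical points or points where the Ricci tensor is proportional to the metric; this requires an argument (as in the paper, via a uniform lower bound on $\|S-H\id\|^2$ near the boundary of the non-umbilical set) that you have not supplied.
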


Note that the smoothness condition in Theorem~\ref{th:main} can be relaxed to $C^4$ regularity.

The fact that a weakly Einstein hypersurface having two principal curvatures in a non-flat space of constant curvature is either the product of two spaces of constant curvature or is a rotation hypersurface is proved in~\cite[Lemma~3.12]{MV}.

\section{Preliminaries}
\label{s:pre}

Let $M^n \subset \tM^{n+1}(\kappa)$ be a connected weakly Einstein hypersurface in the space of constant curvature $\kappa \ne 0$. Denote $\ip$ the metric tensor on $\tM^{n+1}(\kappa)$ and the induced metric tensor on $M^n$. Let $\on, \tR$ and $\nabla, R$ denote the Levi-Civita connections and the curvature tensors of $\tM^{n+1}$ and of $M^n$, respectively. Let $\xi$ be a unit normal vector field of $M^n$. For $x \in M^n$ we define the shape operator $S$ on $T_xM^n$ by $SX=-\on_X \xi$, so that $\<\on_XY,\xi\>=\<SX,Y\>$, where $X, Y \in T_xM^n$.

For $x \in M^n$, let $e_i, \; i=1, \dots, n$, be an orthonormal basis for $T_xM^n$ of eigenvectors of $S$, with $\la_i,\; i=1, \dots, n$, being the corresponding eigenvalues. From Gauss equations, the only nonzero components of the curvature tensor $R$ of $M^n$ at $x$ (up to the symmetries) are given by $R(e_i,e_j,e_j,e_i)=\kappa + \la_i\la_j,\; i \ne j$.
It follows that $\check{R}(e_i,e_j) = 0$ for $i \ne j$, and computing $\check{R}(e_i,e_i)$ we find that weakly Einstein condition is equivalent to the equations
\begin{equation}\label{eq:cRii}
  \sum_{j \ne i} (\kappa+\la_i\la_j)^2 = \tfrac{1}{n}\|R\|^2, \quad \text{for } i=1, \dots, n.
\end{equation}

Denote $\mu_1, \dots, \mu_q$ pairwise non-equal eigenvalues of $S$ (so that the $\mu_a$'s are the $\la_i$'s without repetitions), with multiplicities $p_1, \dots, p_q$, respectively. We have $p_a \ge 1$ and $\sum_{a=1}^{q}p_a=n$. In terms of the $\mu_a$'s, equations~\eqref{eq:cRii} take the following form:
\begin{equation}\label{eq:cRaa}
  (p_a-1) (\kappa+\mu_a^2)^2 + \sum_{b \ne a} p_b (\kappa+\mu_a\mu_b)^2 = \tfrac{1}{n}\|R\|^2, \quad \text{for } a=1, \dots, q.
\end{equation}

The following fact is proved in~\cite[Remark~14]{GHMV}.

\begin{lemma} \label{l:four}
In the notation as above, at every point $x \in M^n$, all the principal curvatures $\la_i, \; i=1, \dots, n$, \emph{(}equivalently, all $\mu_a, \; a=1, \dots, q$\emph{)} satisfy the equation
\begin{equation}\label{eq:la4}
  t^4 + (2\kappa-H_2)t^2 - 2\kappa nHt + H_3=0,
\end{equation}
where $H=\frac{1}{n}\Tr S, \; H_2=\Tr (S^2)$ and $H_3=\tfrac{1}{n}\|R\|^2-(n-1) \kappa^2$. In particular, the number $q$ of principal curvatures satisfies $q \le 4$.
\end{lemma}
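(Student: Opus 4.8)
The plan is to derive equation~\eqref{eq:la4} directly from the weakly Einstein equations~\eqref{eq:cRii} by a straightforward algebraic manipulation, treating a fixed principal curvature as the unknown. Work at a fixed point $x \in M^n$, fix an index $i$, and write $t = \la_i$. Expanding the square in~\eqref{eq:cRii} gives
\[
  \sum_{j \ne i}(\kappa + t\la_j)^2 = (n-1)\kappa^2 + 2\kappa t \sum_{j \ne i}\la_j + t^2\sum_{j\ne i}\la_j^2 .
\]
I would then replace the partial power sums by $\sum_{j\ne i}\la_j = nH - t$ and $\sum_{j\ne i}\la_j^2 = H_2 - t^2$, using $nH = \Tr S$ and $H_2 = \Tr(S^2)$, so that the left-hand side becomes a polynomial in $t$ whose coefficients depend only on $\kappa$, $H$ and $H_2$:
\[
  (n-1)\kappa^2 + 2\kappa t(nH - t) + t^2(H_2 - t^2) = -t^4 + (H_2 - 2\kappa)t^2 + 2\kappa n H t + (n-1)\kappa^2 .
\]

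Setting this expression equal to $\tfrac1n\|R\|^2$, moving everything to one side, and multiplying by $-1$ yields exactly
\[
  t^4 + (2\kappa - H_2)t^2 - 2\kappa n H t + \bigl(\tfrac1n\|R\|^2 - (n-1)\kappa^2\bigr) = 0 ,
\]
which is~\eqref{eq:la4} with $H_3 = \tfrac1n\|R\|^2 - (n-1)\kappa^2$. The key observation to record is that the coefficients $2\kappa - H_2$, $-2\kappa n H$ and $H_3$ do not depend on the choice of $i$: they are symmetric functions of all the principal curvatures at $x$ together with $\kappa$, and $\|R\|^2$ is likewise a fixed number once $x$ is fixed. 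Hence one and the same monic quartic~\eqref{eq:la4} annihilates every $\la_i$, and therefore every $\mu_a$. (The same identity can alternatively be read off from~\eqref{eq:cRaa} by performing the analogous expansion grouped by the multiplicities $p_b$.)

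Finally, since~\eqref{eq:la4} is a monic — in particular nonzero — polynomial of degree $4$, it has at most four distinct roots, so the number $q$ of pairwise distinct principal curvatures at $x$ is at most $4$. There is no genuine obstacle in this argument; the only points requiring a little care are the bookkeeping of the power-sum substitutions and the remark that the leading coefficient equals $1$, so that the degree bound remains valid even at points of $M^n$ where some of the lower-order coefficients of~\eqref{eq:la4} happen to vanish.
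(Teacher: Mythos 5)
Your proposal is correct and is exactly the computation the paper summarizes with ``Equation~\eqref{eq:la4} easily follows from~\eqref{eq:cRii}'': you expand the square in~\eqref{eq:cRii}, replace the partial power sums by $nH-\la_i$ and $H_2-\la_i^2$, and observe that the resulting monic quartic has coefficients independent of $i$, whence $q\le 4$. No gaps; this matches the paper's argument, merely with the bookkeeping written out.
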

\begin{proof}
  Equation~\eqref{eq:la4} easily follows from~\eqref{eq:cRii}. The fact that $q \le 4$ follows from~\eqref{eq:la4}: all the principal curvatures satisfy the same polynomial equation of degree $4$.
\end{proof}

Consider an open, connected domain $\mU$ of $M^n$ on which the number of principal curvatures and their multiplicities are constant (in our notation, both $q$ and $p_a,\, a=1,\dots, q$, are constant). On the domain $\mU$, the functions $\la_i$ (and $\mu_a$) are smooth, and we can locally choose an orthonormal frame $\{e_1, \dots, e_n\}$ of eigenvectors of the shape operator $S$.

Denote $\G_{ki}^{\hphantom{k}j}=\<\nabla_{e_k} e_i, e_j\>$ (note that $\G_{kj}^{\hphantom{k}i} = -\G_{ki}^{\hphantom{k}j}$). Codazzi equation takes the form
  \begin{equation}\label{eq:codazzi}
    e_k(\la_i)=\G_{ii}^{\hphantom{i}k} (\la_i-\la_k), \qquad (\la_i-\la_j) \G_{kj}^{\hphantom{k}i} = (\la_k-\la_j) \G_{ij}^{\hphantom{i}k},
  \end{equation}
for pairwise nonequal $i,j,k =1, \dots, n$.
Gauss equation gives
\begin{equation}\label{eq:gauss}
\begin{split}
  R_{kijs}:=\<R(e_k,e_i)e_j, e_s\> &= e_k(\G_{ij}^{\hphantom{i}s})-e_i(\G_{kj}^{\hphantom{k}s}) +
  \sum_{l=1}^{n}(\G_{ij}^{\hphantom{i}l}\G_{kl}^{\hphantom{k}s} - \G_{kj}^{\hphantom{k}l}\G_{il}^{\hphantom{i}s}
   - \G_{lj}^{\hphantom{l}s}(\G_{ki}^{\hphantom{k}l}-\G_{ik}^{\hphantom{i}l}))
  \\
  &= (\kappa + \la_i \la_k) (\delta_{ij}\delta_{ks} - \delta_{kj}\delta_{is}).
\end{split}
\end{equation}

For $a=1, \dots, q$, denote $L_a$ the eigendistribution of $S$ corresponding to the principal curvature $\mu_a$. If $\dim L_a (\, = p_a) > 1$, it easily follows from Codazzi equation that $L_a$ is integrable and its integral submanifolds are totally umbilical (or totally geodesic) in both $M^n$ and $\tM^{n+1}(\kappa)$, and the principal curvature $\mu_a$ is constant along these submanifolds.

\medskip

In the rest of the paper, we give the proof of Theorem~\ref{th:main}. We split it into several cases depending on the number $q$ of different principal curvatures. From Lemma~\ref{l:four} we know that $q \in \{1,2,3,4\}$. If $q=1$ on $\mU$, then $\mU$ is totally umbilical (or totally geodesic), and hence Einstein. The case $q=2$ is mostly known by~\cite[Lemma~3.12]{MV}. We consider it in Section~\ref{s:q=2} showing that a hypersurface $M^n \subset \tM^{n+1}(\kappa)$ satisfying the assumption of Theorem~\ref{th:main} is either a product hypersurface given in Example~\ref{ex:prod}, or a rotation hypersurface given in Example~\ref{ex:rotation}, as per Theorem~\ref{th:main}\eqref{it:thprod} and~\eqref{it:throt}, respectively. In Sections~\ref{s:q=3} and~\ref{s:q=4} we prove that there are no weakly Einstein non-Einstein hypersurfaces $M^n \subset \tM^{n+1}(\kappa)$ of dimension $n \ge 4$ having $q \in \{3,4\}$ principal curvatures (the fact that the case $n=q=4$ is not possible is established in~\cite[Lemma~3.13]{MV}). Both our proofs follow the same scheme: first we verify that the algebraic conditions~\eqref{eq:cRaa} are incompatible with the hypersurface being \emph{isoparametric} (that is, having constant principal curvatures). Next it will be easy to see that if all the multiplicities $p_a,\, a=1, \dots, q$ are greater than $1$, then the principal curvatures are forced to be constant by the algebraic conditions~\eqref{eq:cRaa} and Codazzi equations~\eqref{eq:codazzi}. Then we consider the possibilities of one or more of these multiplicities to be $1$ and show that the algebraic conditions~\eqref{eq:cRaa}, Codazzi equations~\eqref{eq:codazzi} and Gauss equations~\eqref{eq:gauss} again force the principal curvatures to be constant, which is a contradiction. Some parts of the proof rely on showing that certain large systems of polynomial equations have only constant solutions. In these parts, the proof is computer aided (but we explicitly give all the steps).

This leaves only the case $n=q=3$ (so that $p_1=p_2=p_3=1$). By~\cite[Lemma~5]{GHMV}, a $3$-dimensional non-Einstein Riemannian manifold is weakly Einstein if and only if its Ricci operator is of rank one. On the other hand, the algebraic conditions~\eqref{eq:cRaa} imply that the shape operator $S$ has rank $2$ and that the scalar curvature is constant. Hypersurfaces in spaces of constant curvature which satisfy these two conditions have been classified in~\cite{BFZ} (answering an open question on curvature homogeneous hypersurface of $\tM^{4}(\kappa)$ raised in \cite[p. 255]{BKV}). Neither of them has Ricci operator of rank one, by~\cite[Corollary~15]{BFZ}.

The proof is completed in Section~\ref{s:proof} with a local-to-global argument showing that $M^n$ cannot consist of several different pieces as in Examples~\ref{ex:prod} and~\ref{ex:rotation} potentially separated by umbilical points.


\section{Weakly Einstein hypersurfaces having 2 principal curvatures}
\label{s:q=2}

This case is effectively done in~\cite[Lemma~3.12]{MV}. We include a short proof for the sake of completeness, and also to explain the structure of the rotation hypersurfaces given in Example~\ref{ex:rotation}.

In the assumptions of Theorem~\ref{th:main}, suppose that the hypersurface $M^n \subset \tM^{n+1}(\kappa)$ has two principal curvatures everywhere and at no point the Ricci tensor is a multiple of the metric tensor.

Eliminating $\|R\|^2$ from~\eqref{eq:cRaa} we obtain
    \begin{equation} \label{eq:q=2}
      (p_1-1)(\mu_1^3+\mu_1^2 \mu_2 + 2 \kappa \mu_1) + (p_2-1)(\mu_2^3+\mu_2^2 \mu_1 + 2 \kappa \mu_2) = 0.
    \end{equation}
We consider two cases.

First suppose that $p_1, p_2 > 1$. From Codazzi equation~\eqref{eq:codazzi}, the principal curvature $\mu_1$ must be constant along the eigendistribution $L_1$. But then from~\eqref{eq:q=2}, the principal curvature $\mu_2$ also is. Similar argument applied to the eigendistribution $L_2$ shows that the hypersurface is isoparametric. By~\cite[Theorem~3.14, Theorem~3.29]{CR}, \cite[Section~4]{C1}, either $M^n$ is of constant curvature (which is not possible as $M^n$ is non-Einstein), or is a domain of the product $M^{p_1}(\kappa_1) \times M^{p_2}(\kappa_2)$ as given in Example~\ref{ex:prod}, where $\kappa_1^{-1} +\kappa_2^{-1} = \kappa^{-1}$ and $(p_1-1) \kappa_1^2 = (p_2-1) \kappa_2^2$ (the weakly Einstein condition) and, up to relabelling, $\kappa_1, \kappa_2 \kappa > 0$. Note that if $\kappa < 0$, one must have $p_1 < p_2$, and when $\kappa > 0$, one has $p_1 \ne p_2$, for otherwise $\kappa_2= \kappa_1$, and so $M^n$ is Einstein. In both cases, $n (\,= p_1+p_2) \ge 5$.

Now assume that $p_1 = 1$. Then $p_2=n-1 \ge 2$, and equation~\eqref{eq:q=2} gives $\mu_2^2 +\mu_1\mu_2 + 2\kappa=0$. By~\cite[Theorem~4.2]{dCD}, $M^n$ is a domain on a rotation hypersurface, as given in Example~\ref{ex:rotation}, whose principal curvatures are given in~\cite[Proposition~3.2]{dCD}. From this we find $\frac{d^2}{ds^2}(x_1^2)=2 \delta$, where $s$ is an arc length parameter, and $\delta = 1$ for the metrics $g_1$ in cases~\eqref{it:rotss} and~\eqref{it:roths} of Example~\ref{ex:rotation}, $\delta = -1$ for the metric $g_1$ in case~\eqref{it:rothh}, and $\delta = 0$ for the metric $g_1$ in case~\eqref{it:rothp}. It follows that $x_1^2=\delta s^2 + as + b$ for some $a, b \in \br$. When $\delta \ne 0$ (cases~\eqref{it:rotss}, \eqref{it:rothh} and~\eqref{it:roths}) we can shift $s$ to make $a=0$. Then the expression for $x_1$ and the inequalities given in cases \eqref{it:rotss}, \eqref{it:rothh} and~\eqref{it:roths} of Example~\ref{ex:rotation} follow from the conditions $g_1(\gamma,\gamma)= \kappa^{-1}$ and $g_1(\dot{\gamma},\dot{\gamma})= 1$ for the profile curve $\gamma(s)=(x_1(s), x_2(s), x_3(s))$ and the form of the corresponding metric $g_1$. In case~\eqref{it:rothp} we obtain $x_1^2=as + b$, and it is easy to see that if $a = 0$, our hypersurface has constant curvature. So shifting $s$ and changing its sign if necessary we can take $a > 0$ and $b=0$, as in Example~\ref{ex:rotation}\eqref{it:rothp}.

\section{Weakly Einstein hypersurface cannot have 3 principal curvatures}
\label{s:q=3}

Suppose there is a hypersurface $M^n \subset \tM^{n+1}(\kappa)$ satisfying the assumptions of Theorem~\ref{th:main} and having three principal curvatures everywhere. By the argument in the second last paragraph of Section~\ref{s:pre}, we can additionally assume throughout this section that $n \ge 4$.

We first prove the following.

\begin{lemma}\label{l:q3}
    In the above notation and assumptions, the following holds.
  \begin{enumerate}[label=\emph{(\alph*)},ref=\alph*]
    \item \label{it:f1f2}
    The principal curvatures $\mu_a,\, a=1,2,3$, and their multiplicities $p_a$ satisfy the equations $f_1=f_2=0$, where
    \begin{equation} \label{eq:f1f2}
    \begin{split} 
       f_1 & = (1-p_1)\mu_1^2+ (1-p_2)\mu_2^2 + (1-p_3)\mu_3^2 + \mu_1\mu_2 + \mu_2\mu_3 + \mu_3\mu_1  + 2\kappa,\\
       f_2 & = (p_1 - 1)\mu_1 (\mu_1^2 + 2\kappa) + \mu_1(p_2\mu_2^2 + \mu_3\mu_2 + p_3 \mu_3^2)+ 2\kappa (p_2\mu_2 + p_3\mu_3) + \mu_2^2\mu_3 + \mu_2\mu_3^2.
     \end{split}
    \end{equation}

    \item \label{it:noniso3}
    The hypersurface $M^n \subset \tM^{n+1}(\kappa)$ cannot be isoparametric.

    \item \label{it:no1103}
    If $p_1=p_2=1$, then the set of points of $M^n$ at which $\mu_3=0$ has empty interior.

    \item \label{it:1implies3}
    Suppose that for some $a \in \{1,2,3\}$, the function $\mu_a$ is constant on any integral submanifold of $L_a$ \emph{(}recall that by~\eqref{eq:codazzi}, the eigendistribution $L_a$ is integrable, and $\mu_a$ is automatically constant on any integral submanifold of $L_a$ when $p_a > 1$\emph{)}. Then all three principal curvatures $\mu_1,\mu_2$ and $\mu_3$ are also constant on any integral submanifold of $L_a$.

    \item \label{it:exists1}
    Up to relabelling, we have either $p_1=p_2=1 < p_3$ or $p_1 =1 < p_2,p_3$.
  \end{enumerate}
\end{lemma}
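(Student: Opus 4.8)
\emph{Overview and part~\eqref{it:f1f2}.}
The plan is to deduce \eqref{it:f1f2} from the algebra of Section~\ref{s:pre}, derive \eqref{it:noniso3} from the classical theory of isoparametric hypersurfaces, obtain \eqref{it:1implies3} and \eqref{it:exists1} as consequences, and leave \eqref{it:no1103} as the main difficulty. For \eqref{it:f1f2}, the polynomials $f_1, f_2$ are exactly what one obtains by eliminating $\|R\|^2$ from the three equations~\eqref{eq:cRaa}; equivalently, they follow from Lemma~\ref{l:four}. Indeed, $\mu_1,\mu_2,\mu_3$ are three distinct roots of the monic quartic~\eqref{eq:la4}, which therefore factors as $(t-\mu_1)(t-\mu_2)(t-\mu_3)(t-r)$; the vanishing of the $t^3$-coefficient gives $r=-(\mu_1+\mu_2+\mu_3)$, and substituting this into the $t^2$- and $t$-coefficients, while using $nH=\sum_a p_a\mu_a$ and $H_2=\sum_a p_a\mu_a^2$, turns these coefficients into $f_1=0$ and, after using $f_1=0$, into $f_2=0$. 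This is routine.

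\emph{Part~\eqref{it:noniso3}.}
Suppose $M^n$ is isoparametric with $q=3$. By Cartan's classification of isoparametric hypersurfaces of space forms with three principal curvatures, this forces $\kappa>0$, all three multiplicities equal to a common integer $m$ (so $n=3m$), and, in a suitable parametrisation, $\mu_a=\sqrt\kappa\,\cot\theta_a$ with $\theta_a=\theta+(a-1)\pi/3$, $\theta\in(0,\pi/3)$. Then $\kappa+\mu_a^2=\kappa\csc^2\theta_a$, while $(\kappa+\mu_a\mu_b)^2=\tfrac{1}{4}\,\kappa^2\csc^2\theta_a\csc^2\theta_b$ for $a\ne b$, since $\cos^2(\theta_a-\theta_b)=\tfrac{1}{4}$. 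Setting $u_a=\csc^2\theta_a$, $S=u_1+u_2+u_3$ and $c=\tfrac{4}{n\kappa^2}\|R\|^2$, the equations~\eqref{eq:cRaa} reduce to $(3m-4)u_a^2+mSu_a=c$ for $a=1,2,3$. As $m$ is a positive integer, $3m\ne4$, so $u_1,u_2,u_3$ are roots of one quadratic and at least two of them coincide; checking the three cases, this occurs only for $\theta=\pi/6$, whence $(u_1,u_2,u_3)=(4,1,4)$ and $S=9$. The equations corresponding to $u_a=4$ and $u_a=1$ then read $84m-64=c=12m-4$, forcing $m=5/6$, a contradiction.

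\emph{Parts~\eqref{it:1implies3} and~\eqref{it:exists1}.}
For \eqref{it:1implies3}, restrict $f_1=0$ and $f_2=0$ to a leaf of $L_a$ along which $\mu_a$ is constant: this leaves two polynomial equations in the other two principal curvatures, and eliminating one of them yields a polynomial in the remaining one which, using $\kappa\ne0$, one checks (a short computer-assisted elimination) is not identically zero; hence those two curvatures take only finitely many values on the connected leaf, and so are constant there. Part~\eqref{it:exists1} is then immediate: if all $p_a>1$, then by the remark following~\eqref{eq:gauss} each $\mu_a$ is automatically constant on the leaves of $L_a$, so by \eqref{it:1implies3} all of $\mu_1,\mu_2,\mu_3$ are constant on the leaves of every $L_a$; as $TM=L_1\oplus L_2\oplus L_3$, all principal curvatures are locally constant, contradicting \eqref{it:noniso3}. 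Thus some $p_a=1$, and since $n\ge4$ not all $p_a$ can equal $1$; relabelling gives the two stated cases.

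\emph{Part~\eqref{it:no1103} (the main obstacle).}
Assume $\mu_3\equiv0$ on an open set. By \eqref{it:f1f2}, with $p_1=p_2=1$ and $\mu_3=0$, the relation $f_1=0$ becomes $\mu_1\mu_2=-2\kappa$, while $f_2=0$ carries no extra information; in particular $\mu_1,\mu_2$ are nonzero and distinct, with $\mu_2=-2\kappa/\mu_1$. From Codazzi~\eqref{eq:codazzi} one obtains $\G_{rr}^1=\G_{rr}^2=0$ for the indices $r$ belonging to $L_3$ (so the leaves of $L_3$ are totally geodesic) and expresses the derivatives of $\mu_1$ through the connection coefficients $\G_{11}^2$, $\G_{22}^1$, $\G_{11}^r$. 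The plan is to feed all this into the Gauss equations~\eqref{eq:gauss} for the mixed curvature components, all of which equal the constants $\pm\kappa$ here, obtaining an overdetermined first-order system that forces these connection coefficients --- and hence $d\mu_1$ --- to vanish. Once $\mu_1$, and so $\mu_2=-2\kappa/\mu_1$, is constant, the open set in question is isoparametric with $q=3$, contradicting \eqref{it:noniso3}. I expect this final step to be the genuinely difficult, calculation-heavy (and computer-assisted) part of the proof.
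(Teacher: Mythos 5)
Your parts (a), (b) and (e) are essentially correct: the Vieta-style derivation of $f_1,f_2$ from the quartic \eqref{eq:la4} is equivalent to the paper's pairwise elimination in \eqref{eq:cRaa} (indeed $f_2$ differs from the $t$-coefficient relation by a multiple of $f_1$), and your trigonometric treatment of the isoparametric case, forcing $\theta=\pi/6$ and then $84m-64=12m-4$, i.e. $m=5/6$, is a correct and arguably cleaner alternative to the paper's rational parametrisation. The genuine problems are in (c) and, as a consequence, in (d).

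Part (c) is not proved: you only sketch a plan and defer the ``calculation-heavy'' step, and that step, as you envision it, cannot work. On the open set in question the shape operator has eigenvalues $\mu_1$, $\mu_2=-2\kappa/\mu_1$ and $0$ of multiplicity $n-2$, so by \eqref{eq:gauss} the hypersurface is curvature homogeneous; Tsukada's exceptional hypersurface $M^4\subset\tM^5(-1)$ is a \emph{non-isoparametric} hypersurface of exactly this algebraic type, except that for it $\mu_1\mu_2=4$ rather than $-2\kappa=2$ (after normalising $\kappa=-1$). Hence the Gauss--Codazzi system for ``two nonzero curvatures with constant product plus a zero curvature of multiplicity $n-2$'' does admit solutions with $d\mu_1\neq0$, and no bookkeeping of connection coefficients can force $\mu_1$ to be constant unless it exploits the specific constant $-2\kappa$; in effect you would be re-proving the hard part of Tsukada's classification. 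The paper instead invokes Tsukada's Theorems B and C (such a hypersurface has constant curvature, or is isoparametric, or is the exceptional $M^4$) and excludes the exceptional case precisely because $\mu_1\mu_2=4\neq2$. Relatedly, your part (d) breaks down at exactly the point where the paper needs (c): the claim that eliminating one curvature from $f_1=f_2=0$ along a leaf always leaves a nonzero polynomial is false when the constant value of $\mu_a$ on the leaf is $0$ and the other two multiplicities equal $1$; e.g.\ for $a=2$, $p_1=p_3=1$, $\mu_2=0$ one has $f_1=\mu_1\mu_3+2\kappa$ and $f_2=\mu_3f_1$, so the system degenerates to the one-parameter family $\mu_1\mu_3=-2\kappa$ and every eliminant vanishes identically. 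The paper's proof of (d) handles this case separately by appealing to (c); without a complete (c) and this case analysis, (d), and hence your deduction of (e), remain incomplete.
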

\begin{proof}
  For assertion~\eqref{it:f1f2}, let $F_b(\mu_1,\mu_2,\mu_3)$ be the polynomial obtained by subtracting the equation~\eqref{eq:cRaa} with $a=b_2$ from the equation~\eqref{eq:cRaa} with $a=b_1$ and dividing by $\mu_{b_1}-\mu_{b_2}$, where $\{b,b_1,b_2\}=\{1,2,3\}$. We obtain $F_1=p_1 \mu_1^2 (\mu_2+\mu_3) + 2\kappa p_1 \mu_1 + (p_2-1) \mu_2(\mu_2^2 + \mu_2 \mu_3 + 2 \kappa) + (p_3 - 1)\mu_3 (\mu_3^2 + \mu_2 \mu_3 + 2 \kappa)$, and $F_3$ which differs from $F_1$ by permuting subscripts $1$ and $3$ on the right-hand side. Subtracting $F_1$ from $F_3$ and dividing by $\mu_1-\mu_3$ we obtain $f_1$ as given in~\eqref{eq:f1f2}, and then we take $f_2=F_3+\mu_2 f_1$.

  To prove assertion~\eqref{it:noniso3}, we suppose that $M^n$ is isoparametric. Then by~\cite[Theorem~3.14]{CR}, \cite[Section~6]{C1}, one has $\kappa > 0$, and we can assume that $\tM^{n+1}(\kappa) = S^{n+1}(1)$. Furthermore, by~\cite[Theorem~3.26]{CR} and \cite[Section~2]{C2}, the principal curvatures and their multiplicities satisfy $p_1=p_2=p_3=p \in \{1,2,4,8\}$ and $\mu_1 = t, \, \mu_2=\frac{t-\sqrt{3}}{1+\sqrt{3}t}, \, \mu_3=\frac{t+\sqrt{3}}{1-\sqrt{3}t}$, where $t > \frac{1}{\sqrt{3}}$. Substituting into~\eqref{eq:f1f2} we obtain that $f_2 + t f_1$ may only be zero when $t =\sqrt{3}$, but substituting $t =\sqrt{3}$ into $f_1$ we get $5-6p$.   

  For assertion~\eqref{it:no1103}, let $p_1=p_2=1$ and assume that $\mu_3=0$ on an open domain $\mU \subset M^n$. Then from~\eqref{eq:f1f2} we obtain $\mu_1\mu_2=-2\kappa$ on $\mU$. By~\eqref{eq:gauss}, the components of the curvature tensor relative to the orthonormal basis $\{e_i\}$ are constant, and so the hypersurface $\mU$ is \emph{curvature homogeneous}. By \cite[Theorems~B,C]{Tsu}, a curvature homogeneous hypersurface in a space form $\tM^{n+1}(\kappa)$ (where $\kappa \ne 0$ and $n \ge 4$) either has constant curvature, or is isoparametric, or is congruent to a domain of a special hypersurface $M^4$ of the hyperbolic space $\tM^5(-1)$ (up to scaling). But by~\cite[Lemma~5.3(3)]{Tsu}, for such a hypersurface, $\mu_1 \mu_2 = 4$ contradicting the fact that $\mu_1\mu_2=-2\kappa=2$ (alternatively, one can deduce a contradiction from~\cite[Section~7]{BFZ}).

  For assertion~\eqref{it:1implies3}, suppose that $\mu_2$ is constant on any integral submanifold of the distribution $L_2$. Computing the resultant of $f_1$ and $f_2$ (which are given by~\eqref{eq:f1f2}) relative to $\mu_3$ we obtain a polynomial $\Phi$ of $\mu_1$ with coefficients depending on $\mu_2$ (and the constants $\kappa, p_1,p_2,p_3$) whose leading term is $(p_1 - 1)(p_1 + p_3 - 1)\mu_1^6$. If $p_1 > 1$, this is nonzero, which implies that $\mu_1$ is constant on any integral submanifold of the distribution $L_2$. If $p_1=1$, then the leading term of $\Phi$ relative to $\mu_1$ is $p_3(p_2 + p_3 - 1)\mu_2^2 \mu_1^4$. If $\mu_2 \ne 0$ at a point $x \in M^n$, then the restriction of $\Phi$ to the integral submanifold of $L_2$ passing through $x$ is a nonzero polynomial of $\mu_1$ with constant coefficients, which implies that $\mu_1$ is constant on that submanifold. If $\mu_2 = 0$ at a point $x \in M^n$, then substituting $p_1=1$ and $\mu_2=0$ into $f_1$ and $f_2$ we obtain $p_3=1$ (and $\mu_1\mu_3=-2\kappa$). By assertion~\eqref{it:no1103}, the set of such points $x \in M^n$ has an empty interior, which implies that $\mu_1$ is constant on all integral submanifolds of the distribution $L_2$. By a similar argument, $\mu_3$ is also constant on any such integral submanifold.

  Assertion~\eqref{it:exists1} follows from assertions~\eqref{it:1implies3} and~\eqref{it:noniso3} (and the fact that $n \ge 4$).
\end{proof}

We now separately consider two cases in Lemma~\ref{l:q3}\eqref{it:exists1}. In both cases, we show that equations~\eqref{eq:f1f2} together with Codazzi and Gauss equations will force all three principal curvatures to be constant, in contradiction with Lemma~\ref{l:q3}\eqref{it:noniso3}. Some of calculations in these cases are computer aided.

First consider the case when $p_1=1 < p_2, p_3$. Choose the labelling in such a way that $\la_1=\mu_1$. By Lemma~\ref{l:q3}\eqref{it:1implies3}, we have $e_i(\mu_a)=0$, for all $a=1,2,3$ and all $i > 1$. Note that $\mu_1$ cannot be locally constant, for otherwise by Lemma~\ref{l:q3}\eqref{it:1implies3}, $\mu_2$ and $\mu_3$ are also locally constant contradicting Lemma~\ref{l:q3}\eqref{it:noniso3}. Replacing $M^n$ by a smaller domain if necessary, we can assume that $\nabla \mu_1 \ne 0$ everywhere. It follows that the distribution $L_2 \oplus L_3$ is integrable, as is tangent to the level hypersurfaces of the function $\mu_1$, and so $\G_{ij}^{\hphantom{i}1}=\G_{ji}^{\hphantom{j}1}$ for $i,j > 1$. Then~\eqref{eq:codazzi} give $\G_{ij}^{\hphantom{i}k}=0$ when $\la_i=\la_j=\mu_a, \, \la_k = \mu_b$ with $b \ne a,1$ (in particular, $e_1$ is a geodesic vector field), $\G_{ij}^{\hphantom{i}1}=0$ when $i \ne j$ and $\G_{1i}^{\hphantom{1}j}=0$ when $\la_i \ne \la_j$. The latter means that $\nabla_1 L_a \subset L_a$ for $a =2,3$. We can then specify vector fields $e_i \in L_a,\, a=2,3$, by choosing them arbitrarily on a level hypersurface of the function $\mu_1$ and then extending parallelly along the (geodesic) integral curves of $e_1$. Then $\G_{1i}^{\hphantom{1}j}=0$, for all $i, j =1, \dots, n$. We also obtain $\G_{ii}^{\hphantom{i}1}=\G_{jj}^{\hphantom{j}1}$ when $\la_i=\la_j=\mu_a,\, a>1$, and so we denote $\G_{ii}^{\hphantom{i}1}=\phi_a$, where $\la_i=\mu_a,\, a=2,3$. Then the only remaining Codazzi equations are
\begin{equation}\label{eq:codazzi31}
  e_1(\mu_2)=\phi_2(\mu_2-\mu_1), \qquad e_1(\mu_3)=\phi_3(\mu_3-\mu_1).
\end{equation}
Computing $R_{1ii1},\, i > 1$, and $R_{jiij},\, \la_i =\mu_2,\, \la_j = \mu_3$, from Gauss equations~\eqref{eq:gauss} we find
\begin{equation}\label{eq:gauss31}
  \mu_2\mu_3+\phi_2\phi_3+\kappa=0,\quad e_1(\phi_2)=\phi_2^2+\mu_2\mu_1+\kappa, \quad e_1(\phi_3)=\phi_3^2+\mu_3\mu_1+\kappa.
\end{equation}
Furthermore, taking $p_1=1$ in \eqref{eq:f1f2} (and checking that $\mu_2+\mu_3$ cannot be zero on a subset of $M^n$ with nonempty interior, for otherwise all three principal curvatures $\mu_a$ are constant in contradiction with Lemma~\ref{l:q3}\eqref{it:noniso3}) we can solve $f_1$ for $\mu_1$ and substitute the resulting expression into~\eqref{eq:codazzi31},~\eqref{eq:gauss31} and into $f_2=0$. The latter gives the equation $f_3=0$, where
    \begin{equation*} 
       f_3 = p_2(p_2 - 1)\mu_2^4 + p_3(p_3 - 1)\mu_3^4 + (2p_2p_3 - p_2 - p_3 + 1)\mu_2^2\mu_3^2 + 2\kappa(p_2 + p_3 - 1)\mu_2\mu_3
    \end{equation*}
Now we differentiate $f_3$ along $e_1$ and substitute $e_1(\mu_2)$ and $e_1(\mu_3)$ from~\eqref{eq:codazzi31} (with $\mu_1$ having already been expressed in terms of $\mu_2$ and $\mu_3$ from $f_1$). This gives a polynomial equation $f_4=0$, where $f_4$ is linear in $\phi_2, \phi_3$, with coefficients polynomial in $\mu_2,\mu_3$. Differentiating $f_4$ along $e_1$ and substituting $e_i(\mu_a)$ from ~\eqref{eq:codazzi31} and $e_i(\phi_a)$ from~\eqref{eq:gauss31} we obtain an equation $f_5=0$, where $f_5$ is quadratic in $\phi_2, \phi_3$ with coefficients polynomial in $\mu_2,\mu_3$. Eliminating $\phi_2, \phi_3$ from $f_4=f_5=0$ and the first equation of~\eqref{eq:gauss31} by calculating the corresponding resultants we get a polynomial equation $f_6=0$ for $\mu_2,\mu_3$ with constant coefficients, where $f_6=g_1^4g_2^2g_3^2g_4^2$, and $g_1$ and $g_2$ being quadratic in $\mu_2,\mu_3$, $g_3$ being cubic, and $g_4$ is of degree $14$. Computing the resultants of each of $g_1,g_2,g_3$ and $g_4$ with $f_3$ relative to $\mu_3$ we obtain four polynomials in $\mu_2$ with constant coefficients whose leading terms are $4 (p_2 - 1) (p_2 + p_3 - 1)^{3} \mu_2^{8}, 4 (p_3 - 1) (p_2 + p_3 - 1)^{3} \mu_2^{8},  p_2 (p_2 - 1) p_3^2 (p_3 - 1)^2 (p_2 + p_3 - 1)^{4} \mu_2^{12}$ and $p_2^5 (p_2 - 1)^6 p_3^3 (p_3 - 1)^4 (p_2 + p_3 - 1)^{18} \mu_2^{56}$ respectively. It follows that $\mu_2$ is a constant on $M^n$. By a similar argument, $\mu_3$ is also a constant, as so is $\mu_1$ by $f_1=0$. It follows that $M^n$ is isoparametric, in contradiction with Lemma~\ref{l:q3}\eqref{it:no1103}. 

Now consider the case when $p_1=p_2=1 < p_3$. Choose the labelling in such a way that $\la_1=\mu_1, \, \la_2=\mu_2$, and $\la_i= \mu_3$ for $i > 2$. From Lemma~\ref{l:q3}\eqref{it:no1103} we can assume that $\mu_3 \ne 0$ anywhere on $M^n$. Substituting $p_1=p_2=1$ into $f_1$ and $f_2-\mu_2 f_1$ given by~\eqref{eq:f1f2} we obtain
\begin{equation}\label{eq:11p3}
  (\mu_1+\mu_2)\mu_3=-2\kappa, \qquad \mu_1\mu_2=(n-3)\mu_3^2.
\end{equation}
From equations~\eqref{eq:11p3} it is easy to see that neither of $\mu_a$ can be locally constant, for otherwise the other two also are, in contradiction with Lemma~\ref{l:q3}\eqref{it:noniso3}.

By Lemma~\ref{l:q3}\eqref{it:1implies3}, we have $e_i(\mu_a)=0$, for all $a=1,2,3$ and all $i > 2$. Moreover, the distribution $L_3$ is integrable, and its integral submanifolds are totally umbilical (or totally geodesic) in both $M^n$ and $\tM^{n+1}(\kappa)$. Codazzi equations~\eqref{eq:codazzi} give $\G_{11}^{\hphantom{1}i}=\G_{22}^{\hphantom{2}i}=0$ for $i > 2$, and $\G_{ij}^{\hphantom{i}1}= \G_{ij}^{\hphantom{i}2}= 0$ for all $i \ne j,\,  i, j > 2$. We also obtain $\G_{ii}^{\hphantom{i}a}=\G_{jj}^{\hphantom{j}a}$ for $i, j > 2, \, a=1,2$, and so we denote $\G_{ii}^{\hphantom{i}a}=\phi_a$, where $i> 2 \ge a$. The remaining Codazzi equations are
\begin{gather}\label{eq:codazzi32}
    e_1(\mu_3)=\phi_1(\mu_3-\mu_1), \qquad e_2(\mu_3)=\phi_2(\mu_3-\mu_2), \\
    e_2(\mu_1)=\gamma_2(\mu_1-\mu_2), \qquad e_1(\mu_2)=\gamma_1(\mu_2-\mu_1), \label{eq:codazzi3212} \\
    \G_{i1}^{\hphantom{i}2}(\mu_2-\mu_1) = \G_{21}^{\hphantom{2}i}(\mu_3-\mu_1) = \G_{12}^{\hphantom{1}i}(\mu_3-\mu_2), \label{eq:codazzi32g}
\end{gather}
where $i>2$, and where we denote $\gamma_1=\G_{22}^{\hphantom{2}1},\, \gamma_2=\G_{11}^{\hphantom{1}2}$. For $k \ne s,\, k,s > 2$, we compute from~\eqref{eq:gauss} and \eqref{eq:codazzi32g}: $0=R_{k11s} = 2\G_{k1}^{\hphantom{k}2} \G_{s1}^{\hphantom{s}2} (\mu_3-\mu_2)^{-1} (\mu_1-\mu_2)$. It follows that $\G_{k1}^{\hphantom{k}2}=0$, for all $k > 2$, and so $\G_{21}^{\hphantom{2}k}=\G_{12}^{\hphantom{2}k}=0$, for all $k > 2$, by~\eqref{eq:codazzi32g}. Therefore the distribution $L_1 \oplus L_2$ is integrable and its integral submanifolds are totally geodesic in $M^n$. We can now specify the vector fields $e_i,\, i > 2$, choosing them arbitrarily on an integral submanifold of the distribution $L_3$, and then parallely translating along totally geodesic integral submanifolds of the distribution $L_1 \oplus L_2$. Then we obtain $\G_{1i}^{\hphantom{1}j}=\G_{2i}^{\hphantom{2}j}=0$, for all $i, j > 2$.

Computing $R_{1221}$ and $R_{kijk}, \, k>2, \, i,j \in \{1,2\}$, from Gauss equation~\eqref{eq:gauss} we obtain
\begin{gather} \label{eq:r1221}
  e_1(\gamma_1)+e_2(\gamma_2)-\gamma_1^2-\gamma_2^2=\kappa+\mu_1\mu_2,\\
  e_i(\phi_j)=\phi_i(\phi_j-\gamma_j), \quad e_i(\phi_i)=\phi_j\gamma_j+\phi_i^2+\mu_i\mu_3+\kappa, \quad \{i,j\}=\{1,2\}. \label{eq:phiij}
\end{gather}
From~\eqref{eq:11p3} we obtain $\mu_3 =-2\kappa(\mu_1+\mu_2)^{-1}$ and $f(\mu_1,\mu_2):=(\mu_1+\mu_2)^2\mu_1\mu_2-4\kappa^2(n-3)=0$. Differentiating $f$ along $e_1$ and $e_2$ and using~\eqref{eq:codazzi3212} we obtain $e_i(\mu_i) = \gamma_i\mu_i(\mu_i-\mu_j) (\mu_i+3\mu_j) (\mu_j(3\mu_i+\mu_j))^{-1}$ for $\{i,j\}=\{1,2\}$, which, together with \eqref{eq:codazzi3212}, gives us all the derivatives $e_i(\mu_j), \, i,j \in \{1,2\}$, in terms of $\mu_1,\mu_2,\gamma_1$ and $\gamma_2$. Substituting $\mu_3=-2\kappa(\mu_1+\mu_2)^{-1}$ into~\eqref{eq:codazzi32} we obtain $\phi_i = -2\kappa\gamma_i(\mu_i-\mu_j)^2 (\mu_j(\mu_i^2+\mu_i\mu_j+2\kappa)(3\mu_i+\mu_j))^{-1}$ for $\{i,j\}=\{1,2\}$. Then from~\eqref{eq:phiij} we can express $e_i(\gamma_j),  \, i,j \in \{1,2\}$ in terms of $\mu_1,\mu_2,\gamma_1$ and $\gamma_2$. Now substituting these expressions to~\eqref{eq:r1221} we obtain a polynomial equation $F=h_{1}\gamma_1^2 + h_{2}\gamma_2^2 + h_{3}=0$, where $h_1, h_2$ and $h_3$ are polynomials in $\mu_1$ and $\mu_2$. Differentiating $F$ along $e_i, \, i=1,2$ and using the known expressions for the derivatives of $\gamma_1$ and $\gamma_2$, we obtain equations $F_i=h_{i1}\gamma_1^2 + h_{i2}\gamma_2^2 + h_{i3}=0$, where $h_{i1}, h_{i2}$ and $h_{i3}$ are again polynomials in $\mu_1, \mu_2$. The equations $F=F_1=F_2=0$ can be viewed as a system of linear equations for the vector $(\gamma_1^2, \gamma_2^2, 1)^t$ with coefficients polynomial in $\mu_1, \mu_2$. Computing its determinant we get an equation $F_3=0$, where $F_3$ is a symmetric polynomial in $\mu_1, \mu_2$ of the form $32 \mu_1^2\mu_2^2(\mu_1 + \mu_2)^2(\mu_1 - \mu_2)^6(\mu_1 + 3\mu_2)^6(3\mu_1 + \mu_2)^6 (\mu_2^2+\mu_1\mu_2+ 2\kappa)^2(\mu_1^2 + \mu_1\mu_2 + 2\kappa)^2 F_4(\mu_1,\mu_2)$, where $F_4$ is a symmetric polynomial of $\mu_1, \mu_2$. It is easy to see from~\eqref{eq:11p3} that neither of the above terms (except possibly $F_4$) can be zero on an open subset of $M^n$, as otherwise all three $\mu_a$ are locally constant (and this also guarantees that no denominator in the course of our calculation can be zero). Expressing $F_4$ in terms of the elementary symmetric functions $\sigma_1=\mu_1+\mu_2$ and $\sigma_2=\mu_1\mu_2$ and substituting $\sigma_2=4\kappa^2(n - 3) \sigma_1^{-2}$ from~\eqref{eq:11p3} we obtain a polynomial in $\sigma_1$ with constant coefficients whose leading term is
$192 \kappa^6 (n-1)(n-2) \sigma_1^{14}$. It follows that $\sigma_1$ is a constant which easily implies by~\eqref{eq:11p3} that all three $\mu_a$ are constant on $M^n$, in contradiction with Lemma~\ref{l:q3}\eqref{it:noniso3}.

It follows that in the assumptions of Theorem~\ref{th:main}, the set of points at which $M^n \subset \tM^{n+1}(\kappa)$ has three principal curvatures has empty interior.

\section{Weakly Einstein hypersurface cannot have 4 principal curvatures}
\label{s:q=4}

Suppose there is a hypersurface $M^n \subset \tM^{n+1}(\kappa)$ satisfying the assumptions of Theorem~\ref{th:main} and having four principal curvatures everywhere. By~\cite[Lemma~3.13]{MV} we can assume that not all four of them have multiplicity $1$, or equivalently, that $n \ge 5$.

From~\eqref{eq:la4} we obtain the equations $f_1=f_2=f_3=0$, where
  \begin{equation}\label{eq:vnew}
    f_1=\sum_{a=1}^{4}\mu_a,\quad
    f_2=\sum_{a=1}^{4}(2p_a-1)\mu_a^2 -4\kappa,\quad
    f_3=\sum_{a<b<c} \mu_a\mu_b\mu_c - 2\kappa \sum_{a=1}^{4}p_a\mu_a.
  \end{equation}
Note that from $f_2=0$ we see that $\kappa > 0$. We start somewhat similarly to the case $q=3$ considered in Section~\ref{s:q=3}.

\begin{lemma} \label{l:q4}
   In the above notation and assumptions, the following holds.
  \begin{enumerate}[label=\emph{(\alph*)},ref=\alph*]
    \item \label{it:noniso4}
    The hypersurface $M^n \subset \tM^{n+1}(\kappa)$ cannot be isoparametric.

    \item \label{it:1implies4}
    Suppose that for some $a \in \{1,2,3,4\}$, the function $\mu_a$ is constant on any integral submanifold of $L_a$ \emph{(}this is always true when $p_a > 1$\emph{)}. Then all four principal curvatures $\mu_1,\mu_2,\mu_3$ and $\mu_4$ are also constant on any integral submanifold of $L_a$.

    \item \label{it:exists14}
    At least one of the multiplicities $p_a,\, a=1,2,3,4$, equals $1$ \emph{(}and at least one is greater than $1$, as $n \ge 5$\emph{)}.
  \end{enumerate}
\end{lemma}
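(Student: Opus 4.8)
The plan is to establish the three assertions in the order~\eqref{it:noniso4}, \eqref{it:1implies4}, \eqref{it:exists14}, following the template of Lemma~\ref{l:q3}; assertion~\eqref{it:exists14} will drop out of the first two together with the standing assumption $n\ge 5$. For~\eqref{it:noniso4} I argue by contradiction: if $M^n$ were isoparametric with four distinct principal curvatures, then $\kappa>0$, one may take $\tM^{n+1}(\kappa)=S^{n+1}(\kappa^{-1/2})$, and by the structure theory of isoparametric hypersurfaces in spheres (M\"unzner's theorem, via~\cite{CR,C1,C2}) the multiplicities satisfy $p_1=p_3$, $p_2=p_4$, while the principal curvatures are $\mu_a=\sqrt{\kappa}\,\cot\bigl(\theta+\tfrac{(a-1)\pi}{4}\bigr)$ for some $\theta\in(0,\tfrac{\pi}{4})$. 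Setting $t=\cot\theta>1$ this gives $\mu_1=\sqrt{\kappa}\,t$, $\mu_2=\sqrt{\kappa}\,\tfrac{t-1}{t+1}$, $\mu_3=-\sqrt{\kappa}\,t^{-1}$, $\mu_4=-\sqrt{\kappa}\,\tfrac{t+1}{t-1}$. Substituting into $f_1=0$ from~\eqref{eq:vnew} forces $t^4-6t^2+1=0$, so that $\mu_1^2=\mu_4^2=\kappa(3+2\sqrt2)$ and $\mu_2^2=\mu_3^2=\kappa(3-2\sqrt2)$; feeding this together with $p_1=p_3$, $p_2=p_4$ into $f_2=0$ then yields $\kappa(6n-16)=0$, a contradiction since $n\ge 5$.

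For~\eqref{it:1implies4}, after relabelling I assume that $\mu_1$ is constant along the integral submanifolds of $L_1$. Since $f_1=0$ is linear in $\mu_4$, the plan is to substitute $\mu_4=-(\mu_1+\mu_2+\mu_3)$ into $f_2$ and $f_3$ and then eliminate $\mu_3$ by taking the resultant of the two resulting polynomials, obtaining a polynomial identity $\Phi(\mu_1,\mu_2)=0$ valid on $M^n$ whose coefficients depend only on $\mu_1$ and on $\kappa,p_1,\dots,p_4$. One then checks that the coefficient of the highest power of $\mu_2$ in $\Phi$ is not identically zero in $\mu_1$; since, by~\eqref{it:noniso4}, the map $x\mapsto(\mu_1,\dots,\mu_4)(x)$ is nowhere locally constant, this top coefficient is nonzero on a dense open subset of $M^n$, and there $\Phi$, restricted to an integral submanifold of $L_1$, is a nonzero polynomial in $\mu_2$ with constant coefficients. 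Hence $\mu_2$ is locally constant on that submanifold, and then so are $\mu_3$ and $\mu_4$, since over each connected leaf they satisfy polynomial equations with constant coefficients coming from $f_1=f_2=f_3=0$; by continuity, $e_i(\mu_b)=0$ for every $e_i\in L_1$ and every $b$, on all of $M^n$.

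Assertion~\eqref{it:exists14} then follows: if all $p_a>1$, the hypothesis of~\eqref{it:1implies4} holds for every $a$, so each $\mu_b$ is constant along the integral submanifolds of every $L_a$; since the distributions $L_1,\dots,L_4$ span $TM^n$, each $\mu_b$ is locally constant and $M^n$ is isoparametric, contradicting~\eqref{it:noniso4}. On the other hand, $n=p_1+\dots+p_4\ge 5$ while $q=4$, so not all four multiplicities can equal $1$.

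The step I expect to be the main obstacle is the verification, in~\eqref{it:1implies4}, that the leading coefficient of $\Phi$ does not vanish at the relevant point: this requires a case analysis (computer-aided, as announced in the introduction) according to which of the $p_a$ equal $1$ and which of $\mu_2,\mu_3,\mu_4$ might vanish on an open subset. In the most delicate sub-cases a multiplicity-one principal curvature is forced to vanish on an open set, and one then rules this out either by observing that two of the $\mu_a$ would coincide (contradicting $q=4$), or, exactly as in the proof of Lemma~\ref{l:q3}\eqref{it:no1103}, by invoking the classification of curvature-homogeneous hypersurfaces of space forms from~\cite{Tsu,BFZ}.
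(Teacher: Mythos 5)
Your overall route coincides with the paper's: you prove \eqref{it:noniso4} via the M\"unzner form of the principal curvatures (your arithmetic $t^4-6t^2+1=0$ and $f_2=\kappa(6n-16)$ is the paper's $t=\sqrt2+1$ and $3(p_1+p_2)=4$ in disguise), \eqref{it:1implies4} by substituting $\mu_4=-\mu_1-\mu_2-\mu_3$ into $f_2,f_3$ from~\eqref{eq:vnew} and taking the resultant with respect to $\mu_3$, and \eqref{it:exists14} exactly as the paper intends.

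The one genuine problem is your justification of the nontriviality of the resultant $\Phi$ in \eqref{it:1implies4}. You argue that, by \eqref{it:noniso4}, the four-tuple $(\mu_1,\dots,\mu_4)$ is nowhere locally constant, and from this you conclude that the top coefficient of $\Phi$ in $\mu_2$ --- a function of $\mu_1$ --- is nonzero on a dense open subset. That inference does not work: assertion \eqref{it:noniso4} only forbids all four curvatures from being simultaneously constant on an open set; it does not forbid $\mu_1$ alone from being locally constant with its value a root of that coefficient, which is precisely the situation you must exclude (and at this point you cannot yet invoke ``if $\mu_1$ is locally constant then all are'', since that is assertion \eqref{it:1implies4} itself, the statement being proved). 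The gap closes by direct computation, which is what the paper does: the resultant of $f_2$ and $f_3$ relative to $\mu_3$ has degree $6$ in $\mu_2$ with leading coefficient the absolute constant $4(p_2+p_4-1)(p_2+p_3-1)>0$, independent of $\mu_1$ and $\kappa$. Hence on every leaf of $L_1$ it is a nontrivial polynomial in $\mu_2$ with constant coefficients, and $\mu_2$ (then $\mu_3$ by the analogous resultant or by $f_2$, and $\mu_4$ by $f_1$) is constant along the leaf. In particular, neither your density argument nor the anticipated case analysis over which $p_a=1$ or which $\mu_a$ vanish, nor any appeal to the curvature-homogeneous classification of Tsukada and Bryant--Florit--Ziller, is needed here; those tools enter only in the three-curvature case, in the proof of Lemma~\ref{l:q3}\eqref{it:no1103}.
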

\begin{proof}
  To prove assertion~\eqref{it:noniso4}, we suppose that $M^n$ is isoparametric. Since $\kappa>0$, we can take $\kappa=1$. By~\cite[Theorem~3.26]{CR} and \cite[Satz~1]{Mun}, the principal curvatures and their multiplicities satisfy $p_1=p_3,\; p_2=p_4$ and $\mu_1 = t, \, \mu_2=\frac{t-1}{t+1}, \, \mu_3=-\frac{1}{t},\, \mu_4=\frac{t+1}{1-t}$, where $t > 1$. Substituting into~\eqref{eq:vnew}, from $f_1=0$ we find $t=\sqrt{2}+1$, and then $f_2=0$ gives $3(p_1+p_2)=4$, a contradiction. 

  For assertion~\eqref{it:1implies4}, suppose $a=1$. Substituting $\mu_4=-\mu_1-\mu_2-\mu_3$ into $f_2$ and $f_3$ and computing the resultant of the two resulting polynomials relative to $\mu_3$ we obtain a polynomial equation for $\mu_2$ with constant coefficients whose leading term is
  $4 (p_2 + p_4 - 1)(p_2 + p_3 - 1)\mu_2^6$. It follows that $\mu_2$ is constant on the integral submanifold of the distribution $L_1$. By a similar argument, $\mu_3$ and $\mu_4$ are also constant.

  Assertion~\eqref{it:exists14} follows from assertions~\eqref{it:noniso4} and~\eqref{it:1implies4}.
\end{proof}

By Lemma~\ref{l:q4}\eqref{it:exists14}, out of the four multiplicities $p_a$, we can have either one, or two, or three ones. We consider these three cases below. In all three cases, we will show that the equations~\eqref{eq:vnew} together with Codazzi and Gauss equations will imply that all the principal curvatures are constant, which contradicts Lemma~\ref{l:q4}\eqref{it:noniso4}. In several places, our calculations are computer aided.

\subsection{\texorpdfstring{Case 1. $p_1=1<p_2,p_3,p_4$}{p{\textoneinferior} = 1 < p{\texttwoinferior}, p{\textthreeinferior}, p{\textfourinferior}}}
\label{ss:1p1}
Up to relabelling we can assume that $\mu_1=\la_1$. From Lemma~\ref{l:q4}\eqref{it:1implies4} we obtain $e_i(\mu_a)=0$, for all $a=1,2,3,4$ and all $i > 1$. Note that $\mu_1$ cannot be locally constant, for otherwise by Lemma~\ref{l:q4}\eqref{it:1implies4}, $\mu_2, \mu_3$ and $\mu_4$ are also locally constant contradicting Lemma~\ref{l:q4}\eqref{it:noniso4}. It follows that the distribution $L_2 \oplus L_3 \oplus L_4$ is integrable, as it is tangent to the level hypersurfaces of the function $\mu_1$, and so $\G_{ij}^{\hphantom{i}1} = \G_{ji}^{\hphantom{j}1}$ for $i,j > 1$. Then~\eqref{eq:codazzi} gives $\G_{ij}^{\hphantom{i}k}=0$ when $\la_i=\la_j=\mu_a, \, \la_k = \mu_b$ with $b \ne a,1$ (in particular, $e_1$ is a geodesic vector field), $\G_{ij}^{\hphantom{i}1}=0$ when $i \ne j$ and $\G_{1i}^{\hphantom{1}j}=0$ when $\la_i \ne \la_j$. The latter means that $\nabla_1 L_a \subset L_a$ for $a =2,3,4$. We can then specify vector fields $e_i \in L_a,\, a=2,3,4$, by choosing them arbitrarily on a level hypersurface of the function $\mu_1$ and then extending parallely along the (geodesic) integral curves of $e_1$. Then $\G_{1i}^{\hphantom{1}j}=0$, for all $i, j =1, \dots, n$. We also obtain $\G_{ii}^{\hphantom{i}1}=\G_{jj}^{\hphantom{j}1}$ when $\la_i=\la_j=\mu_a,\, a>1$, and so we denote $\G_{ii}^{\hphantom{i}1}=\phi_a$, where $\la_i=\mu_a,\, a=2,3,4$. Then the only remaining Codazzi equations are
\begin{equation}\label{eq:codazzi41}
  e_1(\mu_a)=\phi_a(\mu_a-\mu_1), \qquad a=2,3,4,
\end{equation}
and
\begin{equation}\label{eq:codazzi41abc}
\G_{ij}^{\hphantom{i}k}(\mu_c-\mu_b)=\G_{jk}^{\hphantom{j}i}(\mu_a-\mu_c)=\G_{ki}^{\hphantom{k}j}(\mu_b-\mu_a) :=\psi_{ijk},
\end{equation}
where $\la_i=\mu_a,\, \la_j=\mu_b,\, \la_k=\mu_c$ and $\{a,b,c\}=\{2,3,4\}$. Note that $\psi_{ijk}$ is symmetric by all three indices.

Computing $R_{1ii1},\, i > 1$, from Gauss equations~\eqref{eq:gauss} we find
\begin{equation}\label{eq:gauss41}
  e_1(\phi_a)=\phi_a^2+\mu_a\mu_1+\kappa, \quad a=2,3,4.
\end{equation}

Now let $\la_i =\mu_a,\, \la_j = \mu_b$, where $a \ne b, \, a,b > 1$. Gauss equations~\eqref{eq:gauss} for $R_{jiij}$ give $\mu_a\mu_b+\phi_a\phi_b+\kappa=2 (\mu_c-\mu_a)^{-1}(\mu_c-\mu_b)^{-1} \sum_{k:\la_k=\mu_c} \psi_{ijk}^2$, where $\{a,b,c\}=\{2,3,4\}$. It follows that $\sum_{k:\la_k=\mu_c} \psi_{ijk}^2$ does not depend on the choice of the unit vectors $e_i \in L_a$ and $e_j \in L_b$. Similarly, $\sum_{j:\la_j=\mu_b} \psi_{ijk}^2$ does not depend on the choice of the unit vectors $e_i \in L_a$ and $e_k \in L_c$ (where  $\{a,b,c\}=\{2,3,4\}$). Suppose that locally the functions $\psi_{ijk}$ are not all identically zero. Then for a fixed $i$ such that $e_i \in L_a$, the $p_b \times p_c$ matrix $\psi_{ijk}$, where $\la_j=\mu_b,\, \la_k =\mu_c$, is a nonzero multiple of an orthogonal matrix. In particular, we have $p_b=p_c$, and by a similar argument, $p_a=p_c$, so that $p_2=p_3=p_4=:m$. We denote $\Psi = 2m^{-2} \sum_{i:\la_i=\mu_a, \, j:\la_j=\mu_b, \, k:\la_k=\mu_c} \psi_{ijk}^2$, if locally the functions $\psi_{ijk}$ are not all identically zero, and $\Psi = 0$ otherwise. Then for $\{a,b,c\}=\{2,3,4\}$ we obtain
\begin{equation}\label{eq:gauss41ab}
  \mu_a\mu_b+\phi_a\phi_b+\kappa=(\mu_c-\mu_a)^{-1}(\mu_c-\mu_b)^{-1}\Psi,\quad \text{and either } \Psi=0, \text{ or } p_2=p_3=p_4 (\, =m).
\end{equation}
We first consider the case when $\Psi=0$ (locally). Then from~\eqref{eq:gauss41ab} we get $(\mu_2\mu_3+\kappa) (\mu_3\mu_4+\kappa) (\mu_4\mu_2+\kappa) = -\rho^2$ for $\rho:=\phi_2\phi_3\phi_4 \ne 0$ (if $\rho = 0$, then by~\eqref{eq:codazzi41} and Lemma~\ref{l:q4}\eqref{it:1implies4}, we get a contradiction with Lemma~\ref{l:q4}\eqref{it:noniso4}), and then $\phi_a=-\rho (\mu_b\mu_c + \kappa)^{-1}$, where $\{a,b,c\}=\{2,3,4\}$. Substituting $\mu_1=-\mu_2-\mu_3-\mu_4$ into $f_2, f_3$ given by equations~\eqref{eq:vnew} we obtain two polynomial equations for $\mu_2, \mu_3, \mu_4$. Differentiating them along $e_1$ and substituting~\eqref{eq:codazzi41} and the above expressions for $\phi_a$ we obtain another two polynomial equations for $\mu_2, \mu_3, \mu_4$. Eliminating first $\mu_4$ and then $\mu_3$ we obtain a polynomial $P$ of degree $72$ in $\mu_2$ whose constant term is the product of $\kappa^{36}$ and a certain polynomial $P'$ in $p_3$ and $p_4$ with constant coefficients. Substituting $p_3=2+t_3,\, p_4=2+t_4$ we find that $P'$ has all its coefficients positive and a nonzero constant term. It follows that the constant term of $P$ is not zero, and so $P$ is a nontrivial polynomial of $\mu_2$. But then $\mu_2$ is locally constant, and by a similar argument, $\mu_3$ and $\mu_4$ (and then also $\mu_1$) are, in contradiction with Lemma~\ref{l:q4}\eqref{it:noniso4}.

Now suppose $\Psi$ is locally nonzero. From~\eqref{eq:gauss41ab} we obtain $\sum_{a=2}^4 \eta_a=\sum_{a=2}^4 \mu_a\eta_a=0$, where $\eta_a = \mu_b\mu_c + \phi_b\phi_c + \kappa$, with $\{a,b,c\}=\{2,3,4\}$. Differentiating the equation $\sum_{a=2}^4 \eta_a=0$ along $e_1$ and substituting~\eqref{eq:codazzi41},\eqref{eq:gauss41}, and then using the equations $\phi_b\left(\sum_{a=2}^{4}\eta_a\right)=0$ for $b=2,3,4$ and $\mu_1=-\mu_2-\mu_3-\mu_4$ from~\eqref{eq:vnew} we obtain $\sum_{a=2}^4 \phi_a\eta_a=0$. As $\eta_a \ne 0$, it follows that $\phi_a =\alpha \mu_a +\beta,\, a=2,3,4$, for some functions $\alpha$ and $\beta$. Substituting these expressions to~\eqref{eq:gauss41ab} and eliminating $\Psi$ we find $(\alpha^2+1) \sigma_2+2 \alpha \beta \sigma_1+3 (\beta^2+\kappa) = (\alpha^2+1)(9 \sigma_3 -\sigma_1 \sigma_2) + 2\alpha \beta (3\sigma_2 - \sigma_1^2)=0$, where $\sigma_1=\mu_2+\mu_3+\mu_4,\, \sigma_2=\mu_2\mu_3+\mu_3\mu_4+\mu_4\mu_2$ and $\sigma_3=\mu_2\mu_3\mu_4$. From~\eqref{eq:vnew} we obtain $\sigma_3-\sigma_2 \sigma_1-2 \kappa (m -1) \sigma_1=m \sigma_1^2-(2 m-1) \sigma_2-2 \kappa=0$.
Differentiating the former equation along $e_1$ and using~\eqref{eq:codazzi41},\eqref{eq:gauss41} and the fact that $\phi_a =\alpha \mu_a +\beta,\, a=2,3,4$ we get $4 m \alpha \sigma_1^3-(10 m-3) \alpha \sigma_1 \sigma_2+2 \beta(2 m  + 1) \sigma_1^2+3 \alpha(2 m - 1) \sigma_3-2 \beta(2 m -1) \sigma_2=0$. Now we can express $\sigma_2$ and $\sigma_3$ in terms of $\sigma_1$ from the second and the third last equations and substitute to the remaining three equations. Eliminating $\beta$ and then $\alpha$ we get a polynomial equation for $\sigma_1$ with constant coefficients whose leading term equals $576 m^4 (m -1)^2 (m -3)^2 (3 m -1)^2 (2 m -1)^2 (m +1)^2 \sigma_1^{22}$. As $m \ge 2$, this is nonzero unless $m=3$. But when $m=3$ we get a polynomial equation for $\sigma_1$ with the leading term $\kappa^2 \sigma_1^{18}$. It follows that $\sigma_1$ is locally constant, and then $\sigma_2$ and $\sigma_3$ also are. But then all the $\mu_a$ are constant which contradicts Lemma~\ref{l:q4}\eqref{it:noniso4}.

\subsection{\texorpdfstring{Case 2. $p_1=p_2=1<p_3,p_4$}{p{\textoneinferior} = p{\texttwoinferior} = 1 < p{\textthreeinferior}, p{\textfourinferior}}}
\label{ss:2p1}
Choose the labelling in such a way that $\la_1=\mu_1, \, \la_2=\mu_2$. Throughout this subsection, we use the following notation convention: $a, b, c \in \{1,2\}$, and then $i,j,k$ are such that $\la_i=\la_j=\la_k=\mu_3$, and $r,s,t$ are such that $\la_r=\la_s=\la_t=\mu_4$.

We have three algebraic equations~\eqref{eq:vnew} for $\mu_1, \mu_2, \mu_3$ and $\mu_4$ which in this case take the form
  \begin{equation}\label{eq:vnew2}
  \begin{split}
    f_1&=\mu_1+\mu_2+\mu_3+\mu_4=0,\\
    f_2&=\mu_1^2 + \mu_2^2 +(2p_3-1)\mu_3^2 + (2p_4-1)\mu_4^2 -4\kappa=0,\\
    f_3&=(\mu_1+\mu_2)\mu_3\mu_4+ \mu_1\mu_2(\mu_3+\mu_4) - 2\kappa (\mu_1+\mu_2+p_3\mu_3+p_4\mu_4)=0.
  \end{split}
  \end{equation}
These equations are symmetric relative to $\mu_1$ and $\mu_2$. If $p_3 = p_4$, we obtain $\mu_2=-\mu_1, \; \mu_4=-\mu_3$ and $\mu_1^2 + (2p_3-1)\mu_3^2 = 2 \kappa$. If $p_3 \ne p_4$, we can express $\mu_3$ and $\mu_4$ in terms of $\sigma_1=\mu_1+\mu_2$ and $\sigma_2=\mu_1\mu_2$, and we also obtain a nontrivial polynomial relation for $\sigma_1$ and $\sigma_2$. In both cases, if at least one of $\mu_1,\mu_2,\mu_3$ or $\mu_4$ is locally constant, then so are the other three.

By Lemma~\ref{l:q4}\eqref{it:1implies4}, we have $e_i(\mu_m)=e_r(\mu_m)=0$, for all $m=1,2,3,4$. Moreover, the distributions $L_3$ and $L_4$ are integrable, and their integral submanifolds are totally umbilical (or totally geodesic) in both $M^n$ and $\tM^{n+1}(\kappa)$. Codazzi equations~\eqref{eq:codazzi} give $\G_{ij}^{\hphantom{1}r}=\G_{ij}^{\hphantom{2}a}=\G_{rs}^{\hphantom{1}i}=\G_{rs}^{\hphantom{2}a}=0$ and $\G_{aa}^{\hphantom{i}i}= \G_{aa}^{\hphantom{i}r}= 0$. We also obtain $\G_{ii}^{\hphantom{i}a}=\G_{jj}^{\hphantom{j}a}$ for all $i, j$ and $a$; denote $\G_{ii}^{\hphantom{i}a}=\phi_3^a$. Similarly, $\G_{rr}^{\hphantom{r}a}=\G_{ss}^{\hphantom{s}a}$ for all $r, s$ and $a$; denote $\G_{rr}^{\hphantom{r}a}=\phi_4^a$. We also denote
\begin{equation*}
\G_{aa}^{\hphantom{a}b}= :\gamma_a, \quad \eta_i:=\G_{i2}^{\hphantom{i}1} (\mu_1-\mu_2), \quad \xi_r:=\G_{r2}^{\hphantom{i}1} (\mu_1-\mu_2), \quad \Phi_{ai}^{\hphantom{a}r} := \Gamma_{ai}^{\hphantom{a}r} (\mu_4-\mu_3),
\end{equation*}
for $a \ne b$. In these notation, the remaining Codazzi equations~\eqref{eq:codazzi} take the following form.
\begin{equation}\label{eq:codazzi2n}
\begin{gathered}
e_a(\mu_b)=\gamma_b(\mu_b-\mu_a), \quad \G_{ab}^{\hphantom{a}i}(\mu_3-\mu_b) = \eta_i, \quad \G_{ab}^{\hphantom{a}r}(\mu_4-\mu_b) = \xi_r \qquad a \ne b,\\
e_a(\mu_3)=\phi_3^a(\mu_3-\mu_a), \quad e_a(\mu_4)=\phi_4^a(\mu_4-\mu_a), \quad  \G_{ir}^{\hphantom{i}a}(\mu_a-\mu_4) = \G_{ri}^{\hphantom{r}a}(\mu_a-\mu_3) = \Phi_{ai}^{\hphantom{a}r}.
\end{gathered}
\end{equation}
Using~\eqref{eq:codazzi2n}, we can write down Gauss equations~\eqref{eq:gauss} for $R_{aija}$ and $R_{arsa}$ as the follows (where $\{a,b\}=\{1,2\}$ and the summations are over all $r$ such that $\la_r=\mu_4$ and over all $i$ such that $\la_i=\mu_3$, respectively): 
\begin{align}
  &
  \begin{aligned}
  2 (\mu_4-\mu_3)^{-1}(\mu_a-\mu_4)^{-1} \sum_r \Phi_{aj}^{\hphantom{a}r} \Phi_{ai}^{\hphantom{a}r} &= (e_a(\phi_3^a) - \phi_3^b \gamma_a - (\phi_3^a)^2 - \mu_a\mu_3 - \kappa) \delta_{ij} \\
  &+ 2 (\mu_3-\mu_b)^{-1}(\mu_a-\mu_b)^{-1} \eta_i \eta_j,
  \end{aligned} \label{eq:Gauss2aija}
  \\
  &
  \begin{aligned}
  2 (\mu_3-\mu_4)^{-1}(\mu_a-\mu_3)^{-1} \sum_i \Phi_{ai}^{\hphantom{a}s} \Phi_{ai}^{\hphantom{a}r} &= (e_a(\phi_4^a) - \phi_4^b \gamma_a - (\phi_4^a)^2 - \mu_a\mu_4 - \kappa)\delta_{rs}\\
  &+ 2 (\mu_4-\mu_b)^{-1}(\mu_a-\mu_b)^{-1} \xi_r \xi_s.
  \end{aligned} \label{eq:Gauss2arsa}
\end{align}
Gauss equations~\eqref{eq:gauss} for $R_{ikis}, R_{rsrk}$ and $R_{rijs}$ give:
\begin{gather}
  \sum_{c=1}^{2} (\mu_c-\mu_3)^{-1} (\mu_c-\mu_4)^{-1} \Phi_{ck}^{\hphantom{c}s} e_c(\mu_3) = \sum_{c=1}^{2} (\mu_c-\mu_3)^{-1} (\mu_c-\mu_4)^{-1} \Phi_{ck}^{\hphantom{c}s} e_c(\mu_4) = 0, \label{eq:Gauss2mix1} \\
  \sum_{c=1}^{2} (\mu_c-\mu_3)^{-1}(\mu_c-\mu_4)^{-1} (\Phi_{cj}^{\hphantom{c}r} \Phi_{ci}^{\hphantom{c}s} + \Phi_{ci}^{\hphantom{c}r} \Phi_{cj}^{\hphantom{c}s})= \delta_{ij}\delta_{rs} \Big(\kappa + \mu_3\mu_4 + \sum_{c=1}^{2} \phi_3^c \phi_4^c\Big). \label{eq:Gauss2mix2}
\end{gather}

Consider the set of $2$-dimensional vectors $((\mu_1-\mu_3)^{-1} (\mu_1-\mu_4)^{-1} \Phi_{1k}^{\hphantom{c}s}, (\mu_2-\mu_3)^{-1} (\mu_2-\mu_4)^{-1} \Phi_{2k}^{\hphantom{c}s})$, for all $k, s$ (such that $\la_k=\mu_3$ and $\la_s = \mu_4$). If this set contains two linearly independent vectors, then~\eqref{eq:Gauss2mix1} implies that $\mu_3$ and $\mu_4$ are locally constant, and then from equations~\eqref{eq:vnew2} we obtain that $\mu_1$ and $\mu_2$ are also locally constant, which is a contradiction.

Thus either $\Phi_{ck}^{\hphantom{c}s} = 0$, for all $c, k$ and $s$, or there exist a nonzero $p_3 \times p_4$ matrix $\Psi=(\Psi_k^s)$ and functions $\alpha_1, \alpha_2$, not simultaneously zeros, such that $\Phi_{ck}^{\hphantom{c}s} = \alpha_c \Psi_k^s$.

We show that the latter case leads to a contradiction. Indeed, in that case, equation~\eqref{eq:Gauss2mix2} gives $(\sum_{c=1}^{2}(\mu_c-\mu_3)^{-1} (\mu_c-\mu_4)^{-1} \alpha_c^2) (\Psi_j^r \Psi_i^s + \Psi_i^r \Psi_j^s) = \delta_{ij}\delta_{rs} \Big(\kappa + \mu_3\mu_4 + \sum_{c=1}^{2} \phi_3^c \phi_4^c\Big)$. If $\sum_{c=1}^{2}(\mu_c-\mu_3)^{-1} (\mu_c-\mu_4)^{-1} \alpha_c^2 \ne 0$, then taking $j=i, \, r \ne s$, with $\Psi_i^r \ne 0$ we obtain $\Psi_i^s = 0$. Then $\kappa + \mu_3\mu_4 + \sum_{c=1}^{2} \phi_3^c \phi_4^c=0$ (by taking $j=i, \, r=s$) which leads to a contradiction with taking $j=i, \, s=r$. Hence we obtain
\begin{equation}\label{eq:case2alpha}
(\mu_1-\mu_3)^{-1} (\mu_1-\mu_4)^{-1} \alpha_1^2 + (\mu_2-\mu_3)^{-1} (\mu_2-\mu_4)^{-1} \alpha_2^2 = 0.
\end{equation}
To avoid a contradiction, we need the coefficients of $\alpha_1^2$ and $\alpha_2^2$ in~\eqref{eq:case2alpha} to be of the opposite signs. Moreover, as we can simultaneously multiply $\alpha_1, \alpha_2$ and divide the matrix $\Psi=(\Psi_k^s)$ by some nonzero function, we can assume from~\eqref{eq:case2alpha} that $\alpha_1$ and $\alpha_2$ locally depend on (one of) the functions $\mu_m, \, m=1,2,3,4$. In particular, we have $e_i(\alpha_a) = e_r(\alpha_a) = 0$.

Furthermore, from~\eqref{eq:Gauss2mix1}, we obtain that the functions $\mu_3$ and $\mu_4$ (and hence also $\mu_1$ and $\mu_2$ in view of~\eqref{eq:vnew2}) are constant along the integral curves of the vector field $V=(\mu_1-\mu_3)^{-1} (\mu_1-\mu_4)^{-1} \alpha_1 e_1 + (\mu_2-\mu_3)^{-1} (\mu_2-\mu_4)^{-1} \alpha_2 e_2$. Hence the distribution $\br V \oplus L_3 \oplus L_4$ is integrable (its integral submanifolds are the level hypersurfaces of any of the function $\mu_m, \, m=1,2,3,4$). Using the fact that both $\mu_m$ and $\alpha_a$ are constant along the trajectories of both $e_i$ and $e_r$ we find that the vector fields $[V,e_i]$ and $[V,e_r]$ lie in $\br V \oplus L_3 \oplus L_4$ if and only if both vector fields $\eta_i((\mu_2-\mu_3)^{-1} (\mu_1-\mu_4)^{-1} \alpha_1 e_2 - (\mu_1-\mu_3)^{-1} (\mu_2-\mu_4)^{-1} \alpha_2 e_1)$ and $\xi_r((\mu_2-\mu_4)^{-1} (\mu_1-\mu_3)^{-1} \alpha_1 e_2 - (\mu_1-\mu_4)^{-1} (\mu_2-\mu_3)^{-1} \alpha_2 e_1)$ are the multiples of $V$. If $\xi_r \ne 0$ or $\eta_i \ne 0$, this gives $(\mu_2-\mu_3)(\mu_1-\mu_4) + (\mu_1-\mu_3) (\mu_2-\mu_4) = 0$, and an easy check shows that this equation, together with~\eqref{eq:vnew2}, implies that all four $\mu_m$'s are locally constant. 
It follows that $\xi_r = 0$ for all $r$ with $\la_r = \mu_4$, and $\eta_i = 0$ for all $i$ with $\la_i = \mu_3$. Then equations~\eqref{eq:Gauss2aija} and~\eqref{eq:Gauss2arsa} imply that both matrices $\Psi \Psi^t$ and $\Psi^t \Psi$ are the multiples of the identity matrices. As $\Psi \ne 0$, this is only possible when $p_3=p_4$ (and then $\Psi$ is a multiple of an orthogonal matrix). But if $p_3=p_4$, then equations~\eqref{eq:vnew2} imply $\mu_2=-\mu_1$ and $\mu_4=-\mu_3$, and so~\eqref{eq:case2alpha} gives $\alpha_1=\alpha_2=0$, a contradiction.

Thus $\Phi_{ck}^{\hphantom{c}s} = 0$, for all $c, k$ and $s$ (such that $\la_k=\mu_3, \, \la_s = \mu_4$ and $c \in \{1,2\}$). Then equations ~\eqref{eq:Gauss2aija} and~\eqref{eq:Gauss2arsa}, viewed as matrix equations, imply that a linear combination of the identity matrix of size $p_3 \times p_3$ (respectively, of size $p_4 \times p_4$) and a rank $1$ matrix is zero. As $p_3, p_4 > 1$, this is only possible when the linear combination is trivial, that is, when $\eta_i=0, \, \xi_r = 0$ and
\begin{equation} \label{eq:case2eaphia}
e_a(\phi_3^a) = \phi_3^b \gamma_a + (\phi_3^a)^2 + \mu_a\mu_3 + \kappa, \quad e_a(\phi_4^a) = \phi_4^b \gamma_a + (\phi_4^a)^2 + \mu_a\mu_4 + \kappa,
\end{equation}
where $\{a,b\}=\{1,2\}$. Moreover, equation~\eqref{eq:Gauss2mix2} and Gauss equations for $R_{i12i}$ and $R_{r12r}$ give, respectively,
\begin{gather}\label{eq:case2alg}
  \kappa + \mu_3\mu_4 + \phi_3^1 \phi_4^1 + \phi_3^2 \phi_4^2 = 0,\\
  e_a(\phi_3^b) = - \phi_3^a \gamma_a + \phi_3^1 \phi_3^2, \quad e_a(\phi_4^b) = - \phi_4^a \gamma_a + \phi_4^1 \phi_4^2, \label{eq:case2eaphib}
\end{gather}
where $\{a,b\}=\{1,2\}$. We show that these equations, together with~\eqref{eq:vnew2} will force all the $\mu_m, \, m=1,2,3,4$ to be locally constant, hence leading to a contradiction.

We first consider the case when $p_3=p_4$. Denote $l=2p_3-1$. From~\eqref{eq:vnew2} we get $\mu_2=-\mu_1, \; \mu_4=-\mu_3$ and $\mu_1^2 + l\mu_3^2 = 2 \kappa$, and then differentiating the latter equation and using~\eqref{eq:codazzi2n} we get $\phi_3^a = -2 l^{-1} \mu_1^2 (\mu_3 (\mu_3 -\mu_a))^{-1} \gamma_b, \; \phi_4^a = -2 l^{-1} \mu_1^2 (\mu_3 (\mu_3 + \mu_a))^{-1} \gamma_b$, where $\{a,b\}=\{1,2\}$. Substituting into~\eqref{eq:case2alg} we get $\gamma_1^2 + \gamma_2^2 = \frac14 l^2 \mu_1^{-4} \mu_3^2 (\mu_1^2-\mu_3^2) (\kappa-\mu_3^2)$. On the other hand, substituting $\phi_3^a$ and $\phi_4^a$ into~\eqref{eq:case2eaphia} and eliminating the derivatives of $\gamma_1$ and $\gamma_2$ we find
$\gamma_1^2 = \gamma_2^2 = -\frac{1}{8}l^2\mu_1^{-2}(\mu_1^2 + l\mu_3^2)^{-1}\mu_3^2(\mu_1^2-\mu_3^2)(\kappa-\mu_3^2)$.
Now eliminating $\gamma_1$ and $\gamma_2$ using the former equation, we obtain either $\kappa=\mu_3^2$ or $2\mu_1^2 + l \mu_3^2=0$. In both cases, when combined with $\mu_1^2 + l \mu_3^2 = 2\kappa$ we get a contradiction arises. 

Now suppose $p_3 \ne p_4$. Differentiating~\eqref{eq:vnew2}, from~\eqref{eq:codazzi2n} we obtain that both $\phi_3^a$ and $\phi_4^a$ equal $\gamma_b$ times some rational functions of the $\mu$'s (where $\{a,b\}=\{1,2\}$). As all the $\mu$'s are functionally dependent, we deduce that the $2$-dimensional vectors $(\phi_3^a e_1(\phi_4^a)-\phi_4^a e_1(\phi_3^a), \phi_3^a e_2(\phi_4^a)-\phi_4^a e_2(\phi_3^a))$ and $(e_1(\mu_1), e_2(\mu_1))$ are linearly dependent, for $a=1,2$. Using~\eqref{eq:case2eaphia} and~\eqref{eq:case2eaphib} (and the expressions for $\phi_3^a$ and $\phi_4^a$) we obtain two equations of the form $A \gamma_1^2 + B \gamma_2^2 +C = 0$, where $A,B$ and $C$ are certain polynomials in the $\mu$'s. The third equation of the same form comes from~\eqref{eq:case2alg}. Eliminating $\gamma_1^2$ and $\gamma_2^2$ from the resulting three equations we get a polynomial equation for the $\mu$'s. Together with~\eqref{eq:vnew2} this gives us four polynomial equations which appear to be symmetric relative to $\mu_1$ and $\mu_2$. Eliminating $\mu_3$ and $\mu_4$ we obtain two polynomial equations in $\sigma_1=\mu_1+\mu_2$ and $\sigma_2=\mu_1\mu_2$. Viewed as polynomials in $\sigma_2$, they have the leading terms $\big((p_3 + p_4 - 1) (p_3 + p_4)^{12} \sigma_1^{12}\big)\sigma_2^{12}$ and $\big((p_3 + p_4)^2 \sigma_1^2\big) \sigma_2^2$, respectively. Eliminating $\sigma_2$ we obtain a polynomial equation in $\sigma_1$ with constant coefficients whose constant term is $4096\kappa^{18} (p_3 + p_4 - 1) (p_3 - p_4)^{12}$. Hence $\sigma_1$ is locally a nonzero constant, and then $\sigma_2$ is also locally a constant. This implies that both $\mu_1$ and $\mu_2$ are locally constants, a contradiction. 

\subsection{\texorpdfstring{Case 3. $p_1=p_2=p_3=1<p_4$}{p{\textoneinferior} = p{\texttwoinferior} = p{\textthreeinferior} = 1 < p{\textfourinferior}}}
\label{ss:3p1}
Choose the labelling in such a way that $\la_1=\mu_1, \, \la_2=\mu_2, \, \la_3=\mu_3$, and $\la_i=\mu_4$ for $i>3$.

Note that $\mu_4 \ne 0$ everywhere (for otherwise equations~\eqref{eq:vnew} give $\mu_1\mu_2\mu_3=0$, which is a contradiction as the $\mu_a$'s are pairwise nonequal).

By Lemma~\ref{l:q4}\eqref{it:1implies4}, we have $e_i(\mu_a)=0$, for all $a=1,2,3,4$ and all $i > 3$. Moreover, the distribution $L_4$ is integrable, and its integral submanifolds are totally umbilical (or totally geodesic) in both $M^n$ and $\tM^{n+1}(\kappa)$. Codazzi equations~\eqref{eq:codazzi} give $\G_{11}^{\hphantom{1}i}=\G_{22}^{\hphantom{2}i}=\G_{33}^{\hphantom{2}i}=0$ for $i > 3$, and $\G_{ij}^{\hphantom{i}1}= \G_{ij}^{\hphantom{i}2}= \G_{ij}^{\hphantom{i}3}=0$ for all $i \ne j,\,  i, j > 3$. We also obtain $\G_{ii}^{\hphantom{i}a}=\G_{jj}^{\hphantom{j}a}$ for $i, j > 3, \, a=1,2,3$, and so we denote $\G_{ii}^{\hphantom{i}a}=\phi_a$, where $i> 3 \ge a$.
Let us put  $\gamma_b^a=\G_{bb}^{\hphantom{b}a}$, for $a, b \in \{1,2,3\}, \, a \ne b$. The remaining Codazzi equations are
\begin{gather}
e_a(\mu_4)=\phi_a(\mu_4-\mu_a), \qquad e_a(\mu_b)=\gamma_b^a(\mu_b-\mu_a), \label{eq:codazzi4diff12}\\
    \G_{ia}^{\hphantom{i}b}(\mu_a-\mu_b) = \G_{ab}^{\hphantom{1}i}(\mu_b-\mu_4), \label{eq:codazzi4diff35}\\
    \G_{12}^{\hphantom{1}3}(\mu_3-\mu_2)=\G_{23}^{\hphantom{1}1}(\mu_1-\mu_3)=\G_{32}^{\hphantom{1}1}(\mu_1-\mu_2)=:\delta, \label{eq:codazzi4diff6}
\end{gather}
where $a, b \in \{1,2,3\}, \, a \ne b$, and $i>3$. The equation \eqref{eq:codazzi4diff6} can be written as $\G_{cb}^{\hphantom{b}a} = (\mu_a-\mu_b)^{-1}\delta$ for $\{a,b,c\}=\{1,2,3\}$. Since $\mu_4=-\mu_1-\mu_2-\mu_3$ by~\eqref{eq:vnew}, from \eqref{eq:codazzi4diff12} we obtain
\begin{equation}\label{eq:codazzi4diff7}
  e_a(\mu_a)=\phi_a (2 \mu_a + \mu_b + \mu_c) - \gamma_b^a (\mu_b-\mu_a) -\gamma_c^a(\mu_c-\mu_a), \quad \{a,b,c\}=\{1,2,3\}.
\end{equation}

We next show that $\G_{ka}^{\hphantom{k}b}=0$ for all $k>3$ and all $a,b \le 3$.

Computing $R_{kaak}$ for $a\le 3<k$ from Gauss equations~\eqref{eq:gauss} we obtain that the expression $(\G_{k1}^{\hphantom{k}2})^2 \frac{\mu_1-\mu_2}{\mu_4-\mu_2}-(\G_{k3}^{\hphantom{k}1})^2\frac{\mu_3-\mu_1}{\mu_4-\mu_3}$ and the other two obtained from it by cyclicly permutating the triple $(1, 2, 3)$ do not depend on $k>3$. Define the vectors $v_1, v_2, v_3\in L_4$ by $\<v_c, X\>=\<\nabla_X e_a, e_b\>$, where the triple $(a,b,c)$ is a cyclic permutating the triple $(1, 2, 3)$. Since the choice of an orthonormal basis $\{e_k\},\, 3 \le k \le n$, for $L_4$ is arbitrary, we obtain that the quadratic forms $Q_a$ on $L_4$ defined by $Q_a(X)= \<v_c, X\>^2 \frac{\mu_a - \mu_b}{\mu_4 - \mu_b} + \<v_b,X\>^2 \frac{\mu_a - \mu_c}{\mu_4 - \mu_c}$, where $\{a,b,c\}=\{1,2,3\}$, are constant on the unit sphere of $L_4$, and hence $Q_a(X)=C_a \|X\|^2$ for all $X \in L_4$, with some constants $C_1, C_2$ and $C_3$. We consider two cases.

First suppose that $p_4 \ge 3$. Then we can take a nonzero $X \perp v_b, v_c$ to obtain $C_a = 0$, and hence $Q_a=0$ for $a=1,2,3$. Without loss of generality, we can assume that the points $\mu_2$ and $\mu_3$ do not lie on the interval between the points $\mu_1$ and $\mu_4$ on the real line. Then the cross-ratio $(\mu_1, \mu_4; \mu_2, \mu_3)=\frac{(\mu_2 - \mu_1) (\mu_3 - \mu_4)}{(\mu_2 - \mu_4) (\mu_3 - \mu_1)}$ is positive, and so equation $Q_1=0$ implies that $v_2=v_3=0$. Then from $Q_2=0$ we get $v_1=0$. Hence $\G_{ka}^{\hphantom{k}b}= 0$, for all $k>3$ and all $a,b \le 3$, as required.

A little more work is needed when $p_4=2$ (note that $p_4 \ge 2$ as $n \ge 5$). Out of the three cross-ratios $k_{a}=(\mu_a, \mu_4; \mu_b, \mu_c)=\frac{(\mu_b - \mu_a) (\mu_c - \mu_4)}{(\mu_b - \mu_4) (\mu_c - \mu_a)}$, where $\{a,b,c\}=\{1,2,3\}$, exactly one is negative, say $k_1 < 0 < k_2,k_3$. Then $Q_1$ is non-positive definite on $\br^2 = L_4$, and so the equation $Q_1(X)=C_1\|X\|^2$ implies $Q_1=0$, hence $v_2$ and $v_3$ are linear dependent. Then from the equations $Q_2(X)=C_2\|X\|^2$ and $Q_3(X)=C_3\|X\|^2$ we deduce that $v_1 \perp v_2, v_3$. Relative to a basis $\{e_4, e_5\}$ for $L_4$ such that $v_1$ is a multiple of $e_4$ and $v_2,v_3$ are multiples of $e_5$, equations $Q_2(X)=C_2\|X\|^2,\, Q_3(X)=C_3\|X\|^2$ and $Q_1=0$ give
\begin{equation*}
\G_{41}^{\hphantom{k}2} =\G_{43}^{\hphantom{k}1} = \G_{52}^{\hphantom{k}3} =0, \qquad
\G_{51}^{\hphantom{k}2}\mu_{12}=\G_{51}^{\hphantom{k}3}\mu_{13}=\G_{42}^{\hphantom{k}3}\mu_{23}=:\theta,
\end{equation*}
where $\mu_{ab}:=\sqrt{|(\mu_4-\mu_c)(\mu_a-\mu_b)|}$ for $\{a,b,c\}=\{1,2,3\}$ (up to changing the sign of some of the vectors $e_2$ and $e_4$). But then from Gauss equations~\eqref{eq:gauss} for $R_{4125}$ and $R_{4135}$, we obtain $e_4(\theta)=-\theta^2 (\mu_{12}\mu_{23})^{-1} \mu_{13} k_1 = \theta^2 (\mu_{13}\mu_{23})^{-1} \mu_{12} k_1^{-1}$, which implies $\theta = 0$, as required.

The rest of the proof is straightforward, but somewhat computationally involved.

As $\G_{ka}^{\hphantom{k}b}= 0$, equation \eqref{eq:codazzi4diff35} implies that $\G_{ab}^{\hphantom{b}k}=0$, for all $k > 3$ and all $a, b \le 3$. It follows that the distribution $L_1 \oplus L_2 \oplus L_3$ is integrable and that its integral submanifolds are totally geodesic in $M^n$. We can now specify the vector fields $e_i,\, i > 3$, choosing them arbitrarily on an integral submanifold of the distribution $L_4$, and then parallelly translating along totally geodesic integral submanifolds of the distribution $L_1 \oplus L_2 \oplus L_3$. Then we obtain $\G_{ai}^{\hphantom{1}j}=0$, for all $i, j > 3 \ge a$.

Gauss equations~\eqref{eq:gauss} for $R_{kaak}, \, R_{kabk}, \, R_{abba}$ and $R_{abbc}$, where $\{a,b,c\}=\{1,2,3\}$ and $k>3$ give, respectively,
\begin{gather}
    e_a(\phi_a) - \sum_{b\neq a}\gamma_{a}^{b}\phi_b-\phi_a^2 = \kappa +\mu_a\mu_4, \qquad
    e_a(\phi_b) = \phi_a \phi_b + \phi_c \G_{ab}^{\hphantom{1}c} - \gamma_{a}^{b}\phi_a,\label{eq:RkabkRkaak}\\
    e_a(\gamma_{b}^{a}) + e_b(\gamma_{a}^{b}) - (\gamma_{a}^{b})^2-(\gamma_{b}^{a})^2 + \G_{ab}^{\hphantom{1}c}\G_{ba}^{\hphantom{1}c}-\G_{ca}^{\hphantom{1}b}\G_{ac}^{\hphantom{1}b}-
    \G_{cb}^{\hphantom{1}a}\G_{bc}^{\hphantom{1}a}-\kappa-\mu_a\mu_b=0,\label{eq:Rabba}\\
    e_a(\gamma_{b}^{c}) - e_b(\G_{ab}^{\hphantom{1}c}) + \gamma_{b}^{a}\gamma_{a}^{c} + \gamma_{a}^{b}(\G_{ba}^{\hphantom{1}c}+\G_{ab}^{\hphantom{1}c}) - \gamma_{b}^{c}\gamma_{b}^{a} + \gamma_{c}^{b}(\G_{ab}^{\hphantom{1}c}-\G_{ba}^{\hphantom{1}c})=0.\label{eq:Rabbc}
\end{gather}

Denote $m=n-3$. From~\eqref{eq:vnew} we get $\mu_4=-\mu_1-\mu_2-\mu_3$ and then $f_2=f_3=0$ yield
\begin{gather*}
    f_2=(\mu_1^2+\mu_2^2+\mu_3^2)+(2m-1)(\mu_1+\mu_2+\mu_3)^2-4\kappa=0,\\
f_3=\mu_1\mu_2\mu_3 - m(\mu_1+\mu_2+\mu_3)((\mu_1+\mu_2+\mu_3)^2 - 2\kappa)=0.
\end{gather*}
We note that these equations give $\mu_1\mu_2+\mu_1\mu_3+\mu_2\mu_3=m\sigma^2 -2\kappa$ and $\mu_1\mu_2\mu_3=m\sigma(\sigma^2-2\kappa)$, where $\sigma = \mu_1 + \mu_2 + \mu_3$, and so all four function $\mu_i$ are locally functions of $\sigma$.

Differentiating $f_2$ and $f_3$ along $e_a$ and using \eqref{eq:codazzi4diff12} we can solve for $\gamma_{b}^{a}$ to obtain $\gamma_{b}^{a}=\phi_a P_b^a(\mu)$, where $P_b^a(\mu)$ are rational functions of $\mu_1,\mu_2,\mu_3$ and $a,b \le 3, \, a \ne b$. 

Next we substitute these expressions for $\gamma_{b}^{a}$ into the equations~\eqref{eq:Rabba}, and then use~\eqref{eq:RkabkRkaak} for the derivatives of $\phi_a$ and the equation $\G_{cb}^{\hphantom{b}a} = (\mu_a-\mu_b)^{-1}\delta$ which follows from \eqref{eq:codazzi4diff6} to obtain three polynomial equations $h_{ab}=0$, where $1 \le a < b \le 3$. Every polynomial $h_{ab}$ is a linear combination of the functions $\Phi_c = (\phi_c)^2, \, \delta^2$, and $1$, with the coefficients depending only on the $\mu_c, \, c =1,2,3$.

We now consider two equations: the equation for $R_{1223}$ given by~\eqref{eq:Rabba} and the derivative of $h_{13}$ along $e_2$. Substituting the expressions for $\gamma_{b}^{a}$ into them and then using~\eqref{eq:RkabkRkaak} and \eqref{eq:codazzi4diff6} we can eliminate $e_2(\delta)$ to  obtain a polynomial $f_4$ in the $\phi_a$'s, $\mu_b$'s and $\delta$. The resultant of $h_{13}$ and $f_4$ relative to $\delta$ gives a polynomial equation $f_5=0$, where $f_5$ is of the form $4(\mu_1-\mu_2)^2 (\mu_2-\mu_3)^2 (\mu_1-\mu_3)^6 f_6$, and $f_6$ is a polynomial in $\Phi_c = (\phi_c)^2$, with coefficients depending only on the $\mu$'s.

Eliminating $\delta^2$ from the system $h_{12}=h_{13}=h_{23}=0$, we obtain two polynomials $f_7$ and $f_8$ in the $\Phi_c$'s, with the coefficients only depending on the $\mu_a$'s. Solving $f_7=f_8=0$ for $\Phi_1$ and $\Phi_3$ in terms of $\Phi_2$ and the $\mu_a$'s, and substituting the resulting expressions into $f_6$, we obtain a polynomial equation $f_9=0$ for $\Phi_2$ with coefficients depending only on the $\mu_a$'s whose leading term is $-4(\mu_1-\mu_2)^2(\mu_2-\mu_3)^2(\mu_1+2\mu_2+\mu_3)^2 f_{10} \Phi_2^{3}$, where $f_{10}$ is a polynomial in the $\mu_a$'s symmetric by $\mu_{1}$ and $\mu_{3}$, but not by all three $\mu_a$'s. The polynomial $f_{10}$ is nonzero modulo the ideal generated by $f_2$ and $f_3$ (to see this we take its symmetrisation by the permutation group on the elements $(\mu_1,\mu_2,\mu_3)$ and then use the equations $f_2=f_3=0$ to express the resulting symmetric polynomial in terms of $\sigma$; the leading term of it $\sigma^{60}$ times a polynomial with constant coefficients in $m$ having no integer roots $m \ge 2$). It follows that locally $\Phi_2$ (and hence also $\phi_2$) is a function of the $\mu_a$'s (that is, a function of $\sigma$). By symmetry, the same is true for $\phi_1$ and $\phi_3$. Therefore, we obtain $e_{a}(\phi_c) e_b(\sigma) = e_b(\phi_c) e_a(\sigma)$ for all $a,b,c\in\{1,2,3\}$. 

We now eliminate the case when one of the functions $\phi_a$ is locally zero. Suppose that locally $\phi_1 = 0$. Then the Gauss equation~\eqref{eq:gauss} for $R_{akk1}=0$ with $a=2,3$ and $k>3$, gives $\phi_a \delta = 0$, implying that $\delta=0$ (as otherwise, $\phi_2=\phi_3=0$, causing all the $\gamma_{b}^{a}$ to vanish, and thus the $\mu_c$'s to be locally constant). Substituting $\phi_1=\delta=0$ into the polynomials $h_{12}, h_{13}$ and $h_{23}$ we obtain three linear inhomogeneous equations for the vector $(\Phi_2,\Phi_3,1)^{t}$.
Its determinant is given by $(\mu_1+\mu_2+2\mu_3)(\mu_1+2\mu_2+\mu_3)(\mu_1-\mu_2)^3(\mu_1-\mu_3)^3(\mu_2-\mu_3)^4 F(\mu_1,\mu_2,\mu_3)$, where $F$ is a symmetric polynomial in $\mu_2, \mu_3$ (recall that by \eqref{eq:vnew} we have $\mu_1+\mu_2+2\mu_3 = \mu_3-\mu_4$ and $\mu_1+2\mu_2+\mu_3 = \mu_2-\mu_4$). If $F$ were an element of the ideal generated by $f_2$ and $f_3$, then its symmetrisation $F_s$ by $\mu_1,\mu_2,\mu_3$ also would. But using the equations $f_2=f_3=0$ to express $F_s$ in terms of $\sigma$ we obtain a polynomial whose leading term equals $\kappa \sigma^{18}$ times a polynomial with constant coefficients in $m$ having no integer roots $m \ge 2$. It would then follow that $\sigma$ is locally a constant, and hence all the $\mu_i$'s are locally constant leading to a contradiction. Therefore, all the $\phi_a$'s are locally functions of $\sigma$ and are locally nonzero. 

Now solving the equation $e_1(\phi_1)e_2(\sigma) - e_2(\phi_1)e_1(\sigma) = 0$ for $\delta$, we obtain that $\delta\phi_1\phi_2\phi_3$ is given by a polynomial $G$ in the $\Phi_c$'s, with the coefficients being rational functions of the $\mu_a$'s (whose denominators are the products of terms of the form $(\mu_a-\mu_b), \, 1\le a < b \le 4$, and hence are nonzero). Letting $G_1$ be the function obtained by interchanging the indices $1$ and $3$ in $G$, we get the equation $G_1-G=0$. Its left-hand side is a rational function whose denominator is a nonzero polynomial in the $\mu_a$'s and whose numerator has the form $\Phi_2 H$, where $H$ is a polynomial of degree $1$ in the $\Phi_c$'s with coefficients being polynomials in the $\mu_a$'s. Letting $H_1$ be the polynomial obtained by switching indices $1$ and $2$ in $H$, we have obtain $H_1-H = (\mu_1+\mu_2+2\mu_3)(\mu_1-\mu_2)^3(\mu_1-\mu_3)^2(\mu_2-\mu_3)^2((\mu_1+\mu_2+\mu_3)^2  + \Phi_1 + \Phi_2 +\Phi_3 + \kappa)$, which cannot be zero (recall that $\mu_1+\mu_2+2\mu_3 = \mu_3-\mu_4$ and $\kappa > 0$).

This contradiction completes the proof of the fact that a hypersurface $M^n \subset \tM^{n+1}(\kappa)$ satisfying the assumptions of Theorem~\ref{th:main} cannot locally have four principal curvatures.

\section{Proof of Theorem~\ref{th:main}}
\label{s:proof}

Let $M^n \subset \tM^{n+1}(\kappa)$ be a hypersurface satisfying the assumptions of Theorem~\ref{th:main}. By Lemma~\ref{l:four}, at every point of $M^n$ the number $q$ of different principal curvatures is at most $4$. If at some point we have $q=4$, then the same is true in a neighbourhood of that point which leads to contradiction with the result of Section~\ref{s:q=4}. It follows that at every point we have at most $3$ principal curvatures. But if $q=3$ at some point, then $q=3$ also in a neighbourhood of that point contradicting the result of Section~\ref{s:q=3}. Hence at every point of $M^n$, we have $q \le 2$, and moreover, there exists at least one point at which $q = 2$, as otherwise $M^n$ is totally umbilical and hence Einstein (is of constant curvature).

The set $\mU_u \subset M^n$ of umbilical points is closed. At every point $x$ of its complement, $M^n$ has two principal curvatures, $\mu_1(x)$ and $\mu_2(x)$, with multiplicities $p_1(x)$ and $p_2(x)$ respectively. We define $m(x) = \min(p_1(x), p_2(x))$. Then $M^n \setminus \mU_u$ is the nonempty union of pairwise disjoint, open subsets $\mU_m \subset M^n, \, 1 \le m \le n/2$, where $\mU_m=\{x \in M^n \, | \, m(x)=m\}$.

We first suppose for some $m \ge 2$, the set $\mU_m$ is nonempty, and let $\mU_m'$ be one of its connected components. Then both multiplicities $p_1$ and $p_2$ and both principal curvatures $\mu_1$ and $\mu_2$ are constant on $\mU_m'$. By the argument in Section~\ref{s:q=2}, the hypersurface $\mU_m'$ is a domain of the product of two spaces of constant curvature, either as given in Example~\ref{ex:prod}, or both Einstein and weakly Einstein, the only possibility for which is a domain on the product of two congruent spheres (see Section~\ref{s:intro}). But as principal curvatures are constant, the subset $\mU_m'$ is not only open, but also closed. It follows that $\mU_m' = M^n$, and as $M^n$ is not allowed to be Einstein, we obtain the hypersurfaces as in Theorem~\ref{th:main}\eqref{it:thprod}.

Now suppose that the all the sets $\mU_m$ with $m \ge 2$ are empty. Then $\mU_1$ must be nonempty (and open), but not necessarily connected. On $\mU_1$, we have two principal curvatures: $\mu_1$ of multiplicity $1$ and $\mu_2$ of multiplicity $n-1$ (these are well-defined, as $n \ge 3$). Recall that by~\eqref{eq:q=2}, the weakly Einstein condition is equivalent to the equation $\mu_2(\mu_2^2+\mu_2 \mu_1 + 2 \kappa) = 0$. Let $\mU_d = \{x \in \mU_1 \, | \,\mu_2(x) = 0\}$ and $\mU_w = \mU_1 \setminus \mU_d$. The set $\mU_w$ is open and consists of the points of $M^n$ at which the Ricci tensor is not a multiple of the metric tensor. At all points of $\mU_w$ we have $\mu_2^2+\mu_2 \mu_1 + 2 \kappa = 0$.

We have the disjoint union decomposition $M^n = \mU_u \cup \mU_d \cup \mU_w$, with both $\mU_u$ and $\mU_u \cup \mU_d$ being closed ($\mU_d$ by itself may be neither open, nor closed). Suppose $M^n \ne \mU_w$, that is, the boundary $\partial \mU_w$ is nonempty. Let $y \in \partial \mU_w$. Then $y \notin \mU_w$ (as $\mU_w$ is open) and $y \notin \mU_d$. To see the latter, we note that the function $\mu_2^2+\mu_2 \mu_1$ is well-defined and continuous on $\mU_1= \mU_d \cup \mU_w$, but equals $-2\kappa$ on $\mU_w$, and zero at $y$. It follows that $y \in \mU_u$ (so that $y$ is an umbilical point). This cannot occur when $\kappa > 0$. Indeed, the function $\|S-H\id\|^2$ is continuous on $M^n$, and at the points of $\mU_w$, we have $\|S-H\id\|^2 = \frac{n-1}{n}(\mu_1-\mu_2)^2$. But on $\mU_w$ we have $|\mu_2-\mu_1| = 2(|\mu_2|^{-1} \kappa + |\mu_2|) \ge 4 \sqrt{\kappa}$, and so $\|S-H\id\|^2 \ge 16 \frac{n-1}{n} \kappa$, while at an umbilical point, $\|S-H\id\|^2 = 0$. Suppose $\kappa = -1$. Let $\mU_w'$ be a connected component of $\mU_w$ having the point $y \in \mU_u$ on its boundary. Then we can choose Cartesian coordinates $\{x_1, \dots, x_{n+2}\}$ in the Minkowski space $\br^{n+2,1}$ in such a way that $\mU_w'$ is a domain on one of the rotation hypersurfaces given in Example~\ref{ex:rotation}\eqref{it:rothh},~\eqref{it:roths},~\eqref{it:rothp} (see Section~\ref{s:q=2}). In cases \eqref{it:rothh} and \eqref{it:roths} we have $x_1^2=\delta s^2 + b$, where $\delta = \pm 1$, and so by~\cite[Proposition~3.2]{dCD}, $\mu_2^2 = -\kappa + \delta b x_1^{-4}$ and $\mu_1 \mu_2 = -\kappa - \delta b x_1^{-4}$ (note that $b \ne 0$, in both cases, for otherwise $\mu_1=\mu_2$). But then for the points of $\mU_w'$ lying in a small neighbourhood $\mU(y)$, we have $|\mu_2^2 - \mu_1 \mu_2| > \varepsilon$ for some $\varepsilon > 0$, and so $|\mu_2 - \mu_1| > \varepsilon'$ for some $\varepsilon' > 0$ (for otherwise $\mu_2$ would be unbounded on $\mU_w' \cap \mU(y)$, and so also would be $H$). It follows that on the open set $\mU_w' \cap \mU(y)$ we have $\|S-H\id\|^2 > \frac{n-1}{n}(\varepsilon')^2$, and hence the point $y$ lying on its boundary cannot be umbilical. Similar argument also applies in case~\eqref{it:rothp}. Then $x_1^2=as, \, a \ne 0$, and then by~\cite[Proposition~3.2]{dCD} we get $\mu_2^2 = -\kappa - \frac14 a^2 x_1^{-4}$ and $\mu_1 \mu_2 = -\kappa + \frac14 a^2 b x_1^{-4}$ on $\mU_w' \cap \mU(y)$. Then the function $|\mu_2^2 - \mu_1 \mu_2|$ is bounded away from zero on $\mU_w' \cap \mU(y)$, as also is $|\mu_2 - \mu_1|$, and hence also is $\|S-H\id\|^2$. Therefore the point $y$ cannot be umbilical.

It follows that the open subset $\mU_w \subset M^n$ has empty boundary, and so $\mU_w = M^n$. Then $M^n$ has two principal curvatures everywhere, and no points at which the Ricci tensor is a multiple of the metric tensor. Now the classification in Section~\ref{s:q=2} completes the proof of Theorem~\ref{th:main}.

\section*{Declaration}

\noindent \textbf{Funding.} The first and third named authors were supported by the National Research Foundation of Korea(NRF) grant funded by the Korea government(MSIT) (RS-2024-00334956). The second named author was partially supported by ARC Discovery grant DP210100951.

\noindent \textbf{Competing Interests.} The authors have no relevant financial or non-financial interests to disclose.

\noindent \textbf{Author Contributions.} All authors read and approved the final manuscript.

\noindent \textbf{Data Availability.} The manuscript has no associated data.

\bigskip

\end{document}